\newtheorem{theorem}{Theorem}
\newtheorem{coro}{Corollary}
\newtheorem{remark}{Remark}
\newtheorem{lemma}{Lemma}
\newtheorem{assumption}{Assumption}
\newtheorem{definition}{Definition}
\newtheorem{problem}{Problem}
\newtheorem{proof}{Proof}
\begin{document}

\begin{frontmatter}

\title{Control for Networked Control Systems with Remote and Local Controllers over Unreliable Communication Channel\thanksref{footnoteinfo}} 

\thanks[footnoteinfo]{\textbf{This work has been submitted to Automatica on July 25, 2017 and this is the first version for review}. This work is the first version. This work is supported by the Taishan Scholar Construction Engineering by Shandong Government, the National Natural Science Foundation of China (61120106011, 61633014, 61403235, 61573221). Corresponding author Juanjuan Xu.}

\author[China]{Xiao Liang}\ead{liangxiao\_sdu@163.com},    
\author[China]{Juanjuan Xu}\ead{jnxujuanjuan@163.com}               

\address[China]{School of Control Science and Engineering, Shandong University, Jinan, Shandong, P.R.China}  

\begin{keyword}                           
Networked control systems; optimal control; stabilization; local control and remote control.
\end{keyword}                             

\begin{abstract}                          
This paper is concerned with the problems of optimal control and stabilization for networked control systems (NCSs), where the remote controller and the local controller operate the linear plant simultaneously. The main contributions are two-fold. Firstly, a necessary and sufficient condition for the finite horizon optimal control problem is given in terms of the two Riccati equations. Secondly, it is shown that the system without the additive noise is stabilizable in the mean square sense if and only if the two algebraic Riccati equations admit the unique solutions, and a sufficient condition is given for the boundedness in the mean square sense of the system with the additive noise. Numerical examples about the unmanned aerial vehicle model are shown to illustrate the effectiveness of the proposed algorithm.

\end{abstract}

\end{frontmatter}

\section{Introduction}
Networked control systems (NCSs) are control systems consisting of controllers, sensors and actuators which are spatially distributed and coordinated via a certain digital communication networks \cite{R1}-\cite{R5}. Recently, NCSs have received considerable interest due to their applications in different areas, such as automated highway systems, unmanned aerial vehicles and large manufacturing systems \cite{R6}-\cite{R10}. Comparing with the classical feedback control systems, NCSs have vast superiority including low cost, reduced power requirements, simple maintenance and high reliability. However, the packet dropout occurred in the communication channels of NCSs brings in challenging problems \cite{R11}-\cite{R14}. Therefore, it is of great significance to study NCSs with unreliable communication channels where the packet dropout happens.

The research on the packet dropout have been studied in \cite{R15}-\cite{R22}. \cite{R16} introduced the Kalman filter with intermittent observations and the optimal estimator is defined as $\hat{x}_{k|k}=E[x_k|\{z_k\},\{\gamma_k\}]$ with conditioning on the arrival process $\{\gamma_k\}$. \cite{R18} derived the optimal estimator without conditioning on the arrival process $\{\gamma_k\}$ and obtained the optimal controller for the systems subject to the state packet dropout. In \cite{R20}, the optimal linear quadratic Gaussian control for system involving the packet dropout is studied by decomposing the problem into a standard LQR state-feedback controller design, along with an optimal encoder-decoder design. The stabilization problem is investigated in \cite{R22} for NCSs with the packet dropout. Nevertheless, the aforementioned literatures are merely involved in a single controller.

Inspired by the previous work \cite{R38}, the NCSs under consideration of this paper are depicted as in Fig.~\ref{fig1}, which is composed of a plant, a local controller, a remote controller and an unreliable communication channel. The state $x_k$ can be perfectly observed by the local controller. Then, the state $x_k$ is sent to the remote controller via an unreliable communication channel where the packet dropout occurs with probability $p$. We define $y_k$ as the observed signal of the remote controller. When the remote controller observes the signal $y_k$, an acknowledgement is sent from the remote controller to the local controller whether the state is received. Hence, the local controller can observe the signal $y_k$ as well. The two controllers will not perform their control actions until observe the signal $y_k$. \emph{It is stressed that the remote control $u^R_k$ is not available for the local control $u^L_k$ at the same time k.} Besides, the channels from the controllers to the plant are assumed to be perfect. The aim of the optimal control is to minimize the quadratic performance cost of NCSs and stabilize the linear plant.
\begin{figure}[htbp]
  \begin{center}
  \includegraphics[width=0.42\textwidth]{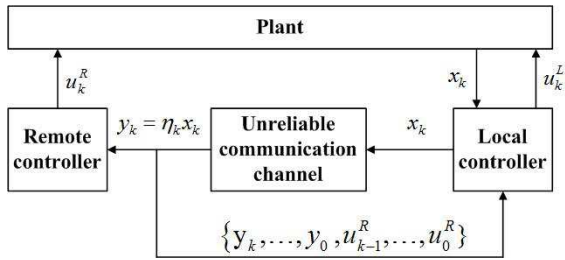}
  \caption{Overview of NCSs with an unreliable communication channel.} \label{fig1}
  \end{center}
\end{figure}

The NCSs model stems from increasing applications that appeal for remote control of objects over Internet-type or wireless networks where the communication channels are prone to failure. The local controller can be an integrated chip on the unmanned aerial vehicle (UAV) that implements moderate control and is poor in the transmission capability, while the remote controller can be a ground-control center which is equipped with complete communication installation and is capable of powerful transmission. Therefore, the communication channel from the local controller to the remote controller is prone to failure. Inversely, the transmission channel from the remote controller to the local controller is normally substantial.

For two decision-makers, the general control strategies are Nash equilibrium and Stackelberg strategy; see \cite{R23}-\cite{R30}. The relationship between Nash equilibrium strategies and the finite horizon $H_2/H_\infty$ control of time-varying stochastic systems subject to Markov jump parameters and multiplicative noise has been studied in \cite{R23}. \cite{R24} investigated the necessary conditions for the Nash game in terms of two coupled and nonsymmetric Riccati equations. \cite{R27} investigated the properties of the Stackelberg solution in static and dynamic nonzero-sum two-play games. In \cite{R29}, the optimal open-loop Stackelberg strategy was designed for a two-player game in terms of three decoupled and symmetric Riccati equations. For the Nash equilibrium, it is necessary for each controllers to access the optimal control strategies of each other, which is different from the idea of this paper. Although the Stackelberg strategy is an available method for two decision-makers, it is assumed that one player is capable of announcing his strategy before the other, which is not applicable in this work. Due to the asymmetric information for the remote controller and the local controller, the analysis and synthesis for the optimal control remain challenging. What's more, one controller can not obtain the current action of the other controller which makes the optimal control problem more difficult. More recently, \cite{R38} studied the similar NCSs model as in this paper. However, in \cite{R38}, it is noted that only sufficient condition was given for the optimal control and the stabilization problem was missing which is of great significance in applications.

In this paper, we shall study the optimal control and stabilization problems for the NCSs with remote and local controllers over unreliable communication channel. The key technique is to apply the Pontryagin's maximum principle to develop a direct approach based on the solution to the forward backward stochastic difference equations (FBSDEs), which will lead to a non-homogeneous relationship between the state estimation and the costate. The main contributions of this papers are summarized as follows: (1) An explicit solution to the FBSDEs is presented with the Pontryagin's maximum principle. Using this solution, a necessary and sufficient condition for the finite horizon optimal control problem is given in terms of the solutions to the two Riccati equations. (2) For the stochastic systems without the additive noise, a necessary and sufficient condition for stabilizing the systems in the mean-square sense is developed. For the stochastic systems with the additive noise, a sufficient condition is derived for the boundedness in the mean-square sense of the systems.

The rest of the paper is organized as follows. The finite horizon optimal control problem is studied in Section II. In Section III, the infinite horizon optimal control and the stabilization problem are solved. Numerical examples about the unmanned aerial vehicle are given in Section IV. The conclusions are provided in Section V. Relevant proofs are detailed in Appendices.

\emph{Notation:} $R^n$ denotes the $n$-dimensional Euclidean space. $I$ presents the unit matrix of appropriate dimension. $A'$ denotes the transpose of the matrix $A$. $\mathcal{F}(X)$ denotes the $\sigma$-algebra generated by the random variable $X$. $A\geq0 (>0)$ means that $A$ is a positive semi-definite (positive definite) matrix. Denote $E$ as mathematical expectation operator. $Tr(A)$ represents the trace of matrix $A$.


\section{Finite Horizon Case}
\subsection{Problem Formulation}
Consider the discrete-time system with two controllers as shown in Fig.1. The corresponding linear plant is given by
\begin{align}
x_{k+1}=Ax_k+B^Lu^L_k+B^Ru^R_k+\omega_k,\label{1}
\end{align}
where $x_k\in R^{n_x}$ is the state, $u^L_k\in R^{n_L}$ is the local control, $u^R_k\in R^{n_R}$ is the remote control, $\omega_k$ is the input noise and $A$, $B^L$, $B^R$ are constant matrices with appropriate dimensions. The initial state $x_0$ and $\omega_k$ are Gaussian and independent, with mean $(\bar{x}_0,0)$ and covariance $(\bar{P}_0,Q_{\omega_k})$ respectively.

As can be seen in Fig.1, let $\eta_k$ be an independent identically distributed (i.i.d) Bernoulli random variable describing the state signal transmitted through the unreliable communication channel, i.e., $\eta_k=1$ denotes that the state packet has been successfully delivered, and $\eta_k=0$ signifies the dropout of the state packet. Then,
\begin{equation}
\eta_k=\left\{\begin{array}{ccc}
1,&\textrm{$y_{k}=x_k$, with probability $(1-p)$} \\
0,&\textrm{$y_{k}=\emptyset$, with probability $p$}\end{array}\right.
\end{equation}

Observing Fig.1, the remote control $u^R_k$ can obtain the signals $\{y_0,\ldots,y_k\}$. Accordingly, we have that $u^R_k$ is $\mathcal{F}\{y_0,\ldots,y_k\}$-measurable. The local control $u^L_k$ has access to the states $\{x_0,\ldots,x_k\}$ and the signals $\{y_0,\ldots,y_k\}$.
In view of (1), we have that $u^L_k$ is $\mathcal{F}\{x_0,\omega_0,\ldots,\omega_{k-1},y_0,\ldots,y_k\}$-measurable. For simplicity, we denote $\mathcal{F}\{x_0,\omega_0,\ldots,\omega_{k-1},y_0,\ldots,y_k\}$ and $\mathcal{F}\{y_0,\ldots,y_k\}$ as $\mathcal{F}_k$ and $\mathcal{F}{\{Y_k\}}$ respectively.

The associated performance index for system (\ref{1}) is given by
\begin{align}
\nonumber J_N=&E\bigg\{\sum_{k=0}^N[{x_k}'Qx_k+(u_{k}^L)'R^Lu_{k}^L+(u_{k}^R)'R^Ru_{k}^R]\\
             &\quad\quad+{x_{N+1}}'P_{N+1}x_{N+1}\bigg\}\label{2}
\end{align}
where $Q$, $R^L$, $R^R$ and $P_{N+1}$ are positive semi-definite matrices. $E$ takes the mathematical expectation over the random process $\{\eta_k\}$, $\{\omega_k\}$ and the random variable $x_0$.

Thus, the optimal control strategies to be addressed are stated as follows:
\begin{problem}
Find the $\mathcal{F}_k$-measurable $u^L_k$ and the $\mathcal{F}{\{Y_k\}}$-measurable $u^R_k$ such that (\ref{2}) is minimized, subject to (\ref{1}).
\end{problem}
\subsection{Optimal Controllers Design}
Following the results in \cite{R31}, we apply Pontryagin's maximum principle to the system (\ref{1}) with the cost function (\ref{2}) to yield the following costate equations:
\begin{align}
\lambda_{k-1}&=E\left[A'\lambda_k|\mathcal{F}_k\right]+Qx_k, k=0,\ldots,N,\label{12}\\
0&=E\left[(B^L)'\hspace{-0.8mm}\lambda_k|\mathcal{F}_k\right]\hspace{-0.8mm}+\hspace{-0.8mm}R^Lu^L_k,\label{13}\\
0&=E\left[(B^R)'\lambda_k|\mathcal{F}\{Y_k\}\right]+R^Ru^R_{k},\label{6}\\
\lambda_N&=P_{N+1}x_{N+1},\label{5}
\end{align}
where $\lambda_k$ is the costate.

It is noted that the key to obtain the optimal controllers is to solve the costate equations (\ref{12})-(\ref{5}). To this end, we define the following two Riccati equations:
\begin{align}
Z_k&=A'Z_{k+1}A+Q-K_k'\Upsilon_kK_k,\label{9}\\
X_k&=A'\Psi_{k+1}A+Q-M_k'\Lambda_k^{-1}M_k,\label{19}
\end{align}
where
\begin{align}
 K_k&=\Upsilon_k^{-1}\begin{bmatrix}B^L&B^R\end{bmatrix}'Z_{k+1}A,\label{27}\\
 \Upsilon_k&=\begin{bmatrix}B^L&B^R\end{bmatrix}'Z_{k+1}\begin{bmatrix}B^L&B^R\end{bmatrix}+\begin{bmatrix}R^L&0\\0&R^R\end{bmatrix},\label{28}\\
\Psi_k&=(1-p)Z_{k}+pX_{k},\label{67}\\
\Lambda_k&=(B^L)'\Psi_{k+1}B^L+R^L,\label{29}\\
M_k&=(B^L)'\Psi_{k+1}A,\label{30}
\end{align}
with terminal values $Z_{N+1}=X_{N+1}=P_{N+1}$.

It can be observed that the equation (\ref{9}) of $Z_k$ is a standard Riccati difference equation. By using the solutions to these two Riccati equations, we firstly propose the following lemma.
\begin{lemma}
Assuming that $\Upsilon_k>0$ and $\Lambda_k>0$ for $k=0,\ldots,N+1$, then the following equation
\begin{align}
\lambda_{k-1}=Z_k\hat{x}_{k|k}+X_k\tilde{x}_{k},\label{31}
\end{align}
is the solution to the FBSDEs
\begin{align}
\nonumber x_{k+1}&=Ax_k+B^Lu^L_k+B^Ru^R_k+\omega_k,\\
\nonumber \lambda_{k-1}&=E\left[A'\lambda_k|\mathcal{F}_k\right]+Qx_k, k=0,\ldots,N,\\
\nonumber \lambda_N&=P_{N+1}x_{N+1},
\end{align}
with $u^L_k$ and $u^R_{k}$ satisfy
\begin{align}
\nonumber0&=E\left[(B^L)'\hspace{-0.8mm}\lambda_k|\mathcal{F}_k\right]\hspace{-0.8mm}+\hspace{-0.8mm}R^Lu^L_k,\\
\nonumber0&=E\left[(B^R)'\lambda_k|\mathcal{F}\{Y_k\}\right]+R^Ru^R_{k},
\end{align}
where $\hat{x}_{k|k}$ and $\tilde{x}_k$ are the estimation and the estimation error of the state $x_k$ respectively,
\begin{align}
\hat{x}_{k|k}&=E[x_k|\mathcal{F}\{Y_k\}]=(1-\eta_k)\hat{x}_{k|k-1}+\eta_kx_k,\label{80}\\
          \tilde{x}_{k}&=x_k-\hat{x}_{k|k},\label{22}
\end{align}
with initial value
\begin{align}
\hat{x}_{0|0}&=(1-\eta_0)\bar{x}_0+\eta_0x_0,\label{83}\\
\tilde{x}_{0}&=x_0-[(1-\eta_0)\bar{x}_0+\eta_0x_0]\label{84},
\end{align}
and $Z_k$ and $X_k$ are as (\ref{9}) and (\ref{19}).
\end{lemma}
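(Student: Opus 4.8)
The plan is to prove the lemma by backward induction on $k$, establishing the relation (\ref{31}) from the terminal condition down to $k=0$. For the base case I would observe that at $k=N+1$ the terminal values $Z_{N+1}=X_{N+1}=P_{N+1}$ together with the decomposition $x_{N+1}=\hat{x}_{N+1|N+1}+\tilde{x}_{N+1}$ give $Z_{N+1}\hat{x}_{N+1|N+1}+X_{N+1}\tilde{x}_{N+1}=P_{N+1}x_{N+1}$, which is exactly the terminal costate (\ref{5}). Hence (\ref{31}) holds at the top of the recursion, and the induction runs downward with hypothesis $\lambda_k=Z_{k+1}\hat{x}_{k+1|k+1}+X_{k+1}\tilde{x}_{k+1}$ used to produce $\lambda_{k-1}=Z_k\hat{x}_{k|k}+X_k\tilde{x}_k$.

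The first task in the inductive step is to extract the optimal controllers from the stationarity conditions (\ref{13}) and (\ref{6}). Substituting the hypothesis and using that $\hat{x}_{k+1|k+1}$ and $\tilde{x}_{k+1}$ are affine in $x_k$, the two controls, and the noise, yields $u^L_k$ and $u^R_k$ as linear feedbacks of $\hat{x}_{k|k}$ and $\tilde{x}_k$. Here the asymmetry of the information is essential: since $u^R_k$ is only $\mathcal{F}\{Y_k\}$-measurable it may depend on $\hat{x}_{k|k}$ but not on $\tilde{x}_k$, whereas $u^L_k$ may use both. I expect the gain governing the estimate channel, which sees both input matrices, to reproduce $K_k$ of (\ref{27})-(\ref{28}), while the error channel, in which only $B^L$ is active because $u^R_k$ cannot respond to $\tilde{x}_k$, should reproduce $M_k$ and $\Lambda_k$ of (\ref{29})-(\ref{30}).

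The second task is to propagate the estimate and error one step and feed them into the recursion (\ref{12}). Using the measurement update (\ref{80}) I would write $\hat{x}_{k+1|k+1}=(1-\eta_{k+1})\hat{x}_{k+1|k}+\eta_{k+1}x_{k+1}$ and the corresponding error, then take $E[\,\cdot\,|\mathcal{F}_k]$. Averaging over the Bernoulli variable $\eta_{k+1}$ is precisely what produces the convex combination $\Psi_{k+1}=(1-p)Z_{k+1}+pX_{k+1}$ of (\ref{67}): with probability $1-p$ the packet arrives, the error vanishes and the estimate absorbs the full state (weighting $Z_{k+1}$), while with probability $p$ it is lost and the error propagates (weighting $X_{k+1}$), so that $\Psi_{k+1}$ multiplies the error-propagation term. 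After inserting the optimal controls I would split the result into the coefficient of $\hat{x}_{k|k}$ and the coefficient of $\tilde{x}_k$; invoking the Riccati identities (\ref{9}) and (\ref{19}) together with the gain definitions should collapse these exactly to $Z_k$ and $X_k$ respectively, completing the step.

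I expect the main obstacle to be the bookkeeping of the nested conditional expectations under the two different information sets, and in particular verifying that the two channels separate cleanly. The crucial facts are the orthogonality $E[\tilde{x}_k|\mathcal{F}\{Y_k\}]=0$ and the independence of $\eta_{k+1}$ and $\omega_k$ from $\mathcal{F}_k$, which are what decouple the $\hat{x}_{k|k}$ and $\tilde{x}_k$ dynamics so that the two Riccati equations close separately. Tracking which quantities are $\mathcal{F}_k$-measurable versus $\mathcal{F}\{Y_k\}$-measurable when the remote control enters the estimate dynamics but is forced out of the error dynamics is the most delicate part, and it is exactly this asymmetry that compels the second Riccati equation $X_k$ to appear alongside the standard one $Z_k$.
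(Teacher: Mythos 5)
Your proposal follows essentially the same route as the paper's proof in Appendix A: backward induction from the terminal condition $Z_{N+1}=X_{N+1}=P_{N+1}$, extraction of the feedback gains $K_k$ and $\Lambda_k^{-1}M_k$ from the two stationarity conditions under their respective information sets, and Bernoulli averaging over $\eta_{k+1}$ to produce $\Psi_{k+1}=(1-p)Z_{k+1}+pX_{k+1}$ before the Riccati identities (\ref{9}) and (\ref{19}) close the recursion. The only presentational difference is that the paper makes your ``two channels'' explicit at the outset by decomposing the local control as $u^L_k=\hat{u}^L_k+\tilde{u}^L_k$ with $\hat{u}^L_k=E[u^L_k|\mathcal{F}\{Y_k\}]$, then solving a coupled equation for $(\hat{u}^L_k,u^R_k)$ and a residual equation for $\tilde{u}^L_k$ separately.
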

\begin{proof}
The proof is put into Appendix A.
\end{proof}
The optimal control strategies for $u^R_k$ and $u^L_k$ are given in the theorem below.
\begin{theorem}
Problem 1 has the unique solution if and only if $\Upsilon_k>0$ and $\Lambda_k>0$ as in (\ref{28}) and (\ref{29}) for $k=N,\ldots,0$.

In this case, the optimal controls $u^R_k$ and $u^L_k$ are given by
\begin{align}
u^R_k&=-\begin{bmatrix}0&I\end{bmatrix}K_k\hat{x}_{k|k},\label{8}\\
u^L_k&=-\begin{bmatrix}I&0\end{bmatrix}K_k\hat{x}_{k|k}-\Lambda_k^{-1}M_k\tilde{x}_{k}.\label{14}
\end{align}
where $K_k$, $\Lambda_k$, and $M_k$ are as (\ref{27}), (\ref{29}) and (\ref{30}). $\hat{x}_{k|k}$ and $\tilde{x}_k$ are defined as in Lemma 1. The associated optimal cost is given by
\begin{align}
\nonumber J^*_N&=E\big[x_0'Z_0\hat{x}_{0|0}+x_0'X_0\tilde{x}_0\big]+\sum_{k=0}^{N}\big[p Tr(X_{k+1}Q_{\omega_k})\\
               &\quad+\hspace{-0.8mm}(1-p)Tr(Z_{k+1}Q_{\omega_k})\big].\label{25}
\end{align}
\end{theorem}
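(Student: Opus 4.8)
The plan is to prove the stated equivalence in both directions and then read off the optimal controls and cost. For the ``if'' direction I would assume $\Upsilon_k>0$ and $\Lambda_k>0$ and invoke Lemma 1, which gives the costate as $\lambda_{k-1}=Z_k\hat{x}_{k|k}+X_k\tilde{x}_k$, so that $\lambda_k=Z_{k+1}\hat{x}_{k+1|k+1}+X_{k+1}\tilde{x}_{k+1}$. Substituting this into the two stationarity conditions (\ref{13}) and (\ref{6}) and solving for the controls is then the main computation. Writing $[B^L\;B^R]=B$ for brevity, I record the one-step predictor $\hat{x}_{k+1|k}=(A-BK_k)\hat{x}_{k|k}$, which follows from taking $E[\cdot\,|\,\mathcal{F}\{Y_k\}]$ of the plant equation once the control forms are posited.

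For the remote control I would apply $E[\cdot\,|\,\mathcal{F}\{Y_k\}]$ to (\ref{6}). The error term drops out because $E[\tilde{x}_{k+1}|\mathcal{F}\{Y_k\}]=0$ (by the tower property through $\mathcal{F}\{Y_{k+1}\}$), and $E[\hat{x}_{k+1|k+1}|\mathcal{F}\{Y_k\}]=\hat{x}_{k+1|k}$; the condition then collapses, via the second block-row of the identity $\Upsilon_kK_k=B'Z_{k+1}A$, to $u^R_k=-[0\;I]K_k\hat{x}_{k|k}$. For the local control I would apply $E[\cdot\,|\,\mathcal{F}_k]$ to (\ref{13}), using independence of $\eta_{k+1}$ from $\mathcal{F}_k$ to obtain $E[\hat{x}_{k+1|k+1}|\mathcal{F}_k]=p\hat{x}_{k+1|k}+(1-p)E[x_{k+1}|\mathcal{F}_k]$ and the matching expression for the error. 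The weight $p(Z_{k+1}-X_{k+1})+\Psi_{k+1}$ then collapses to $Z_{k+1}$, and splitting into the orthogonal $\hat{x}_{k|k}$ and $\tilde{x}_k$ components produces two identities — the first being the first block-row of $\Upsilon_kK_k=B'Z_{k+1}A$, the second following from the definitions of $\Lambda_k$ and $M_k$ — which together give $u^L_k=-[I\;0]K_k\hat{x}_{k|k}-\Lambda_k^{-1}M_k\tilde{x}_k$.

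For the cost I would telescope $\sum_{k=0}^{N}\big(E[x_k'\lambda_{k-1}]-E[x_{k+1}'\lambda_k]\big)$. Expanding with the costate recursion (\ref{12}) and converting $E[(u^L_k)'(B^L)'\lambda_k]=-E[(u^L_k)'R^Lu^L_k]$ and $E[(u^R_k)'(B^R)'\lambda_k]=-E[(u^R_k)'R^Ru^R_k]$ via the stationarity conditions, each summand equals the per-stage running cost minus $E[\omega_k'\lambda_k]$. The telescope collapses to the boundary terms, yielding $J^*_N=E[x_0'\lambda_{-1}]+\sum_{k=0}^{N}E[\omega_k'\lambda_k]$; substituting $\lambda_{-1}=Z_0\hat{x}_{0|0}+X_0\tilde{x}_0$ gives the first term of (\ref{25}). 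Finally, using independence of $\omega_k$ from $\mathcal{F}_k$ and from $\eta_{k+1}$ together with $E[\eta_{k+1}]=1-p$, a short computation gives $E[\omega_k'\lambda_k]=(1-p)Tr(Z_{k+1}Q_{\omega_k})+p\,Tr(X_{k+1}Q_{\omega_k})$, matching the noise sum in (\ref{25}).

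For the ``only if'' direction I would argue by backward induction that uniqueness of the minimizer forces $\Upsilon_k>0$ and $\Lambda_k>0$. The crux is the orthogonal decomposition $x_k=\hat{x}_{k|k}+\tilde{x}_k$ combined with the measurability constraints: $u^R_k$ may depend only on $\hat{x}_{k|k}$, whereas $u^L_k$ may depend on both. Because the stage-plus-cost-to-go is quadratic and $\hat{x}_{k|k}\perp\tilde{x}_k$, the Hessian in the controls block-diagonalizes into the part acting on the estimate — whose curvature for the joint $(u^L_k,u^R_k)$-response is exactly $\Upsilon_k$ — and the part acting on the error, where only $u^L_k$ is free and the curvature is $\Lambda_k$. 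A unique stagewise minimizer therefore forces both blocks positive definite, and conversely a non-positive-definite block would supply a direction along which the cost does not strictly increase, breaking uniqueness. I expect the necessity direction to be the main obstacle: one must show that the asymmetric information makes the control-Hessian split cleanly into the $\Upsilon_k$- and $\Lambda_k$-blocks and that this positivity propagates through the induction without contamination by the future value function. The other delicate point is the $p$ versus $1-p$ bookkeeping in the conditional expectations of the indicator-weighted predictor $\hat{x}_{k+1|k+1}$, where a sign or weight slip would corrupt both the control formulas and the cost.
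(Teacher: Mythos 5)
Your necessity argument and your conditional-expectation bookkeeping are sound and essentially coincide with the paper's proof: the paper also argues by backward induction, sets $x_n=0$, and uses Lemma~1 (valid for indices $\geq n+1$ under the induction hypothesis) together with the identity for $E[x_{n+1}'\lambda_n]$ to write the cost-to-go as a quadratic form that block-diagonalizes --- through the split $u^L_k=\hat u^L_k+\tilde u^L_k$ and the orthogonality of $\hat x_{k|k}$ and $\tilde x_k$ --- into an $\Upsilon_n$-block on the $\mathcal{F}\{Y_n\}$-measurable components and a $\Lambda_n$-block on $\tilde u^L_n$, so that uniqueness forces both blocks to be positive definite. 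Your weight assignments ($\eta_{k+1}=1$ with probability $1-p$), the collapse $p(Z_{k+1}-X_{k+1})+\Psi_{k+1}=Z_{k+1}$, and $E[\omega_k'\lambda_k]=(1-p)Tr(Z_{k+1}Q_{\omega_k})+p\,Tr(X_{k+1}Q_{\omega_k})$ are all correct.

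The genuine gap is in your ``if'' direction. What you propose --- substitute the costate representation of Lemma~1 into the stationarity conditions (\ref{13}), (\ref{6}), solve for the controls, then telescope $\sum_k\big(E[x_k'\lambda_{k-1}]-E[x_{k+1}'\lambda_k]\big)$ --- establishes only that the candidate controls (\ref{8}), (\ref{14}) satisfy the maximum-principle equations and that \emph{their} cost equals (\ref{25}). The equations (\ref{12})--(\ref{5}) are first-order \emph{necessary} conditions; nowhere in your argument is any other admissible control pair compared against the candidate, so neither optimality nor uniqueness of the minimizer --- which is exactly what ``Problem 1 has the unique solution'' asserts, existence included --- is proved. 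This cannot be waved away: the weights are only positive semi-definite, so passing from stationarity to global optimality needs a convexity or verification argument, and uniqueness needs strict positivity of the cost along deviation directions, which is precisely where $\Upsilon_k>0$, $\Lambda_k>0$ must enter. The paper settles all of this at once with a completion-of-squares identity valid for \emph{arbitrary} admissible controls: with $V_N(k,x_k)=E[x_k'Z_k\hat x_{k|k}+x_k'X_k\tilde x_k]$ it derives
\begin{align}
\nonumber J_N={}&E\big[x_0'Z_0\hat x_{0|0}+x_0'X_0\tilde x_0\big]+\sum_{k=0}^N\big[p\,Tr(Q_{\omega_k}X_{k+1})\\
\nonumber&+(1-p)Tr(Q_{\omega_k}Z_{k+1})\big]\\
\nonumber&+E\sum_{k=0}^N\bigg[\Big(\begin{bmatrix}\hat u^L_k\\u^R_k\end{bmatrix}+K_k\hat x_{k|k}\Big)'\Upsilon_k\Big(\begin{bmatrix}\hat u^L_k\\u^R_k\end{bmatrix}+K_k\hat x_{k|k}\Big)\\
\nonumber&+(\tilde u^L_k+\Lambda_k^{-1}M_k\tilde x_k)'\Lambda_k(\tilde u^L_k+\Lambda_k^{-1}M_k\tilde x_k)\bigg],
\end{align}
from which existence, uniqueness, the feedback forms (\ref{8}), (\ref{14}), and the value (\ref{25}) all follow immediately once $\Upsilon_k>0$ and $\Lambda_k>0$. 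Your telescoping computation could be upgraded to such a cost-difference identity holding for every admissible control, but as written the sufficiency half of the theorem is not established.
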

\begin{proof}
The proof of is put into Appendix B.
\end{proof}
\begin{remark}
Based on the results in this paper, the following points are highlighted to make comparison with \cite{R38}. (1) We study both the finite-horizon optimal control and the stabilization problems for the NCSs with remote and local controllers, while the stabilization problem is not considered in \cite{R38}. (2) The key technique herein is the Pontryagin's maximum principle, which is different from the dynamic programming adopted in \cite{R38}. (3) The results obtained in this paper are necessary and sufficient which solve the problem completely, while only sufficient condition is given in \cite{R38}. (4) More specific, the weighting matrices in the finite-horizon cost function are positive semi-definite which are more general than the positive definite ones in \cite{R38}.
\end{remark}
\section{Infinite Horizon Case}
\subsection{Problem Formulation}
In this section, we will solve the infinite horizon optimal control and stabilization problem.

Since the additive noise is present, only a sufficient condition for the boundedness in the mean square sense of the system can be derived; see \cite{R32} and \cite{R33}. In other words, system (\ref{1}) cannot be stabilizable in the mean square sense due to the existence of the additive noise. In order to derive a necessary and sufficient condition for the stabilization of the system, we firstly discard the additive noise of the system (\ref{1}). The stabilization problem for the system (\ref{1}) will be discussed later.

Thus, the system (\ref{1}) without the additive noise can be written as
\begin{align}
x_{k+1}=Ax_k+B^Lu^L_k+B^Ru^R_k, \label{16}
\end{align}
where the initial value $x_0$ is Gaussian random vector with mean $\bar{x}_0$ and covariance $\bar{P}_0$.

Let $P_{N+1}=0$. We change the finite horizon cost function (\ref{2}) to the infinite horizon cost function as follows:
\begin{align} J=&E\sum_{k=0}^\infty[{x_k}'Qx_k+(u_{k}^L)'R^Lu_{k}^L+(u_{k}^R)'R^Ru_{k}^R].\label{21}
\end{align}

We start with the following definitions.
\begin{definition}
The system (\ref{16}) with $u^L_k=0$ and $u^R_k=0$ is said to be asymptotically mean-square stable if for any initial values $x_0$, the corresponding state satisfies
\begin{align}
\nonumber\lim_{k\to \infty}E({x_k'}x_k)=0.
\end{align}
\end{definition}
\begin{definition}
The system (\ref{16}) is called stabilizable in the mean-square sense if there exist the $\mathcal{F}\{Y_k\}$-measurable $u^R_k=-K_1\hat{x}_{k|k}$, and $\mathcal{F}_k$-measurable $u^L_k=-K_2\hat{x}_{k|k}-L\tilde{x}_k$, $k\geq0$ with constant matrices $K_1$, $K_2$ and $L$ such that for any $x_0\in R^n$, the closed-loop system of (\ref{16}) is asymptotically mean-square stable.
\end{definition}
From \cite{R34}, we make the following two assumptions:
\begin{assumption}
$R^L>0$, $R^R>0$ and $Q=C'C\geq0$ for some matrices $C$.
\end{assumption}
\begin{assumption}
$(A,Q^{1/2})$ is observable.
\end{assumption}
The problem to be addressed in this section is stated below:
\begin{problem}
Find the $\mathcal{F}\{Y_k\}$-measurable $u^R_k$, and $\mathcal{F}_k$-measurable $u^L_k$, $k\geq0$, such that the closed-loop system of (\ref{16}) is asymptotically mean-square stable and that (\ref{21}) is minimized.
\end{problem}
To make the time horizon $N$ explicit in the finite horizon case, we rewrite $Z_k$, $X_k$, $K_k$, $\Upsilon_k$, $\Psi_k$, $\Lambda_k$ and $M_k$ in (\ref{9})-(\ref{30}) as $Z_k(N)$, $X_k(N)$, $K_k(N)$, $\Upsilon_k(N)$, $\Psi_k(N)$, $\Lambda_k(N)$ and $M_k(N)$.
\subsection{Solution to Problem 2}
We now present the main results of this section.
\begin{theorem}
Under Assumptions 1 and 2, the system (\ref{16}) is stabilizable in the mean-square sense if and only if there exist the unique solutions $Z$ and $X$ to the following Riccati equations, satisfy $Z>0$, $\Psi>0$ and $E[{x}_{0}'Z\hat{x}_{0|0}+{x}_0'X\tilde{x}_0]\geq0$ for any initial value $x_0$:
\begin{align}
Z&=A'ZA+Q-K'\Upsilon K,\label{24}\\
X&=A'\Psi A+Q-M'\Lambda^{-1}M,\label{26}
\end{align}
where
\begin{align}
K&=\Upsilon^{-1}\begin{bmatrix}B^L&B^R\end{bmatrix}'ZA,\label{35}\\
\Upsilon&=\begin{bmatrix}B^L&B^R\end{bmatrix}'Z\begin{bmatrix}B^L&B^R\end{bmatrix}+\begin{bmatrix}R^L&0\\0&R^R\end{bmatrix},\label{60}\\
\Psi&=(1-p)Z+pX,\\
\Lambda&=(B^L)'\Psi B^L+R^L,\label{81}\\
M&=(B^L)'\Psi A.\label{36}
\end{align}
In this case, the optimal controllers
\begin{align}
u^R_k&=-\begin{bmatrix}0&I\end{bmatrix}K\hat{x}_{k|k},\label{37}\\
u^L_k&=-\begin{bmatrix}I&0\end{bmatrix}K\hat{x}_{k|k}-\Lambda^{-1}M\tilde{x}_{k},\label{38}
\end{align}
stabilize the system (\ref{16}) in the mean square sense and minimize the cost function (\ref{21}). The optimal cost is given by
\begin{align}
J^*&=E\big[x_0'Z\hat{x}_{0|0}+x_0'X\tilde{x}_0\big].\label{54}
\end{align}
\end{theorem}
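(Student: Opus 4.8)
The plan is to relate the infinite-horizon problem to the finite-horizon solution of Theorem~1 by letting $N\to\infty$. Setting $P_{N+1}=0$ and discarding the noise, the optimal finite-horizon cost \eqref{25} reduces to $J_N^*=E[x_0'Z_0(N)\hat{x}_{0|0}+x_0'X_0(N)\tilde{x}_0]$, because every trace term now carries the factor $Q_{\omega_k}=0$. Since the recursions \eqref{9} and \eqref{19} are time-invariant, $Z_0(N)$ and $X_0(N)$ are the $(N+1)$-fold iterates of the associated Riccati operators from the zero terminal condition, and the comparison principle for \eqref{25} (appending a nonnegative stage cannot lower the optimum) makes the optimal values monotone in $N$. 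Treating \eqref{9} as a standard decoupled Riccati recursion, and \eqref{19} with the already-determined $Z$ substituted in as a Riccati recursion in $X$ alone, the matrices $Z_0(N)$ and $X_0(N)$ are monotonically nondecreasing; a monotone matrix sequence converges once it is bounded above, so the convergence step is exactly where each direction supplies its hypothesis.

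For necessity, suppose \eqref{16} is stabilizable, so constant gains $K_1,K_2,L$ as in Definition~2 render $E[x_k'x_k]\to0$; for a linear closed loop this decay is exponential, hence the value $\bar J$ of \eqref{21} under these gains is finite. As $J_N^*$ is the optimum over all admissible controls while \eqref{21} only appends nonnegative stages, $J_N^*\le\bar J$ uniformly in $N$ and in $x_0$, which bounds $Z_0(N)$ and $X_0(N)$ from above. Monotonicity then gives $Z_0(N)\to Z$ and $X_0(N)\to X$; passing to the limit in \eqref{9} and \eqref{19} shows $Z,X$ solve \eqref{24}-\eqref{26}, and $\Psi=(1-p)Z+pX>0$ is inherited in the limit. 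The bound $Z>0$ follows from the observability Assumption~2 by the classical argument that $x_0'Zx_0=0$ would force an unobservable free response. Finally, the orthogonality $E[\tilde{x}_k'W\hat{x}_{k|k}]=0$ for constant $W$, valid because $E[\tilde{x}_k\mid\mathcal{F}\{Y_k\}]=0$, recasts the cost as $J^*=E[\hat{x}_{0|0}'Z\hat{x}_{0|0}+\tilde{x}_0'X\tilde{x}_0]\ge0$, which is the required sign condition; uniqueness I would obtain by showing any solution inducing a stabilizing feedback must equal this monotone limit.

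For sufficiency, assume \eqref{24}-\eqref{26} admit the unique solutions with $Z>0$, $\Psi>0$ and the stated sign condition, and adopt the controllers \eqref{37}-\eqref{38}. Substituting them into \eqref{16} and splitting $x_k=\hat{x}_{k|k}+\tilde{x}_k$ collapses the closed loop to $x_{k+1}=(A-[B^L\ B^R]K)\hat{x}_{k|k}+(A-B^L\Lambda^{-1}M)\tilde{x}_k$, and the measurement update \eqref{80}, \eqref{22} then generates the recursions for $\hat{x}_{k+1|k+1}$ and $\tilde{x}_{k+1}$. Taking $V_k=E[x_k'Z\hat{x}_{k|k}+x_k'X\tilde{x}_k]$ as the optimal cost-to-go, nonnegative by the assumed sign condition, and completing the square through \eqref{24}-\eqref{26}, I expect the identity $V_k-V_{k+1}=E[x_k'Qx_k+(u_k^L)'R^Lu_k^L+(u_k^R)'R^Ru_k^R]$. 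Telescoping over $k=0,\ldots,N$ and letting $N\to\infty$ simultaneously yields the cost formula \eqref{54} and the optimality of \eqref{37}-\eqref{38}; since the right-hand side is nonnegative and $V_k$ is bounded below, the running cost is summable, forcing $E[x_k'Qx_k]\to0$, and Assumption~2 upgrades this to $E[x_k'x_k]\to0$, i.e. mean-square stability.

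The main obstacle is the coupling of the two Riccati equations through $\Psi=(1-p)Z+pX$ intertwined with the filtering structure. The decoupled $Z$-recursion is classical, but \eqref{19} depends on $X$ only through $\Psi$, so its monotonicity and boundedness cannot be read off directly; the clean remedy is to freeze the converged $Z$ and re-run the comparison and monotonicity argument on \eqref{19} as a genuine Riccati recursion in $X$, using $\Psi>0$ to keep $\Lambda$ invertible throughout. Equally delicate is verifying that the Bernoulli variables $\eta_k$ enter the decrement $V_k-V_{k+1}$ exactly so that every cross term between $\hat{x}_{k|k}$ and $\tilde{x}_k$ vanishes under $E[\cdot\mid\mathcal{F}\{Y_k\}]$; this orthogonality is what splits $V_k$ into an estimate part weighted by $Z$ and an error part weighted by $X$, and it is the technical heart underlying both the optimality identity and the mean-square stability conclusion.
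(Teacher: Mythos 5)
Your overall architecture coincides with the paper's: necessity by showing the finite-horizon Riccati solutions (with $P_{N+1}=0$) are monotone and bounded, hence convergent, with positivity extracted from observability; sufficiency via the Lyapunov/completion-of-squares identity for $V(k,x_k)=E[x_k'Z\hat{x}_{k|k}+x_k'X\tilde{x}_k]$, telescoping, and reading off both mean-square stability and the optimal cost. However, there is a genuine gap in your treatment of the coupled equation \eqref{19}, precisely at the point you yourself flag as the main obstacle. Your remedy (``freeze the converged $Z$ and re-run the monotonicity argument on \eqref{19} as a Riccati recursion in $X$ alone'') does not match the object you need to control: the finite-horizon sequence $X_0(N)$ is generated by \eqref{19} with the time-varying kernels $Z_{k+1}(N)=Z_0(N-k-1)$ entering through $\Psi_{k+1}(N)$, not with the limit $Z$. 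A recursion with $Z$ frozen defines a different sequence, so you would still owe a comparison argument showing the two have the same limit; moreover, nothing guarantees that $X_0(N)$ itself is monotone (it is a difference of two monotone sequences), so monotonicity cannot simply be ``re-run''. The paper's device sidesteps this entirely: with $\bar{x}_0=0$ the optimal cost equals $E[x_0'\Psi_0(N)x_0]$ where $\Psi_0(N)=(1-p)Z_0(N)+pX_0(N)$ (equation \eqref{48}), so $\Psi_0(N)$ is monotone (optimal costs increase with the horizon) and bounded above (by the cost of the stabilizing gains, using the summability bounds of the type you invoke), hence convergent; separately, taking $x_0$ observed, i.e. $\tilde{x}_0=0$, gives $J_N^*=E[x_0'Z_0(N)x_0]$ (equation \eqref{82}) and convergence of $Z_0(N)$ by the same argument; then $X_0(N)=p^{-1}\left[\Psi_0(N)-(1-p)Z_0(N)\right]$ converges automatically and the limit satisfies \eqref{26}.

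A second, smaller gap is uniqueness, which you only gesture at (``any solution inducing a stabilizing feedback must equal this monotone limit''). The paper closes it by cost identification: any other solution pair $(Z^e,X^e)$ of \eqref{24}--\eqref{36} with $Z^e>0$, $\Psi^e>0$ produces the optimal cost $E[x_0'\Psi^e x_0]=E[x_0'\Psi x_0]$ for every $x_0$, forcing $\Psi^e=\Psi$; repeating with $\tilde{x}_0=0$ forces $Z^e=Z$, and then $X^e=X$ follows from $\Psi=(1-p)Z+pX$. Your sufficiency half is essentially sound and matches the paper's; note only that the paper makes your last step rigorous by using the time-shifted finite-horizon lower bound $\sum_{k=l}^{l+N_0}(\text{stage cost})\geq E[x_l'\Psi_0(N_0)x_l]$ with $\Psi_0(N_0)>0$, which is the precise form of your claim that Assumption 2 ``upgrades'' $E[x_k'Qx_k]\to0$ to $E[x_k'x_k]\to0$.
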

\begin{proof}
The proof is put into Appendix C.
\end{proof}
Next we shall consider the stabilization problem for the system with the additive noise.
\begin{lemma}
Consider the system (\ref{1}), under Assumptions 1-2, and the system (\ref{16}) is stabilizable in the mean-square sense, then for any initial condition, the estimator error covariance matrix $\Sigma_k=E[(x_k-\hat{x}_{k|k})(x_k-\hat{x}_{k|k})']$ converges to a unique positive definite matrix if and only if $\sqrt{p}|\lambda_{max}(A-B^L\Lambda^{-1}M)|<1$, where $\lambda_{max}(A-B^L\Lambda^{-1}M)$ is the eigenvalue of matrix $(A-B^L\Lambda^{-1}M)$ with the largest absolute value, and $\Lambda,M$ satisfy (\ref{81}) and (\ref{36}).
\end{lemma}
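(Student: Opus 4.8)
The plan is to reduce the claim to a convergence question for a single linear matrix recursion obeyed by $\Sigma_k$, and then to analyze that recursion spectrally. First I would derive the closed-loop error dynamics. Substituting the optimal controllers (\ref{37})--(\ref{38}) into the plant (\ref{1}), splitting $x_k=\hat{x}_{k|k}+\tilde{x}_k$, and forming the one-step prediction $\hat{x}_{k+1|k}=E[x_{k+1}\mid\mathcal{F}\{Y_k\}]$ while using that $\hat{x}_{k|k}$ is $\mathcal{F}\{Y_k\}$-measurable, $E[\tilde{x}_k\mid\mathcal{F}\{Y_k\}]=0$ and $E[\omega_k\mid\mathcal{F}\{Y_k\}]=0$, the estimate part cancels exactly and leaves the prediction error
\begin{align}
\nonumber x_{k+1}-\hat{x}_{k+1|k}=(A-B^L\Lambda^{-1}M)\tilde{x}_k+\omega_k.
\end{align}
Combining this with the measurement update read off from (\ref{80}), namely $\tilde{x}_{k+1}=(1-\eta_{k+1})(x_{k+1}-\hat{x}_{k+1|k})$, gives the error recursion $\tilde{x}_{k+1}=(1-\eta_{k+1})[(A-B^L\Lambda^{-1}M)\tilde{x}_k+\omega_k]$.

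Next I would pass to covariances. Writing $\bar{A}:=A-B^L\Lambda^{-1}M$ and letting $Q_\omega$ denote the (constant) noise covariance, I use that $\eta_{k+1}$ is Bernoulli and independent of $(\tilde{x}_k,\omega_k)$ with $E[(1-\eta_{k+1})^2]=E[1-\eta_{k+1}]=p$, and that $\omega_k$ is zero-mean and independent of $\tilde{x}_k$ so that all cross terms drop, to obtain the affine Lyapunov-type recursion
\begin{align}
\nonumber \Sigma_{k+1}=p\,\bar{A}\,\Sigma_k\,\bar{A}'+pQ_\omega.
\end{align}
The statement thus reduces to whether this time-invariant iteration converges, from every initial covariance, to the unique solution of $\Sigma=p\bar{A}\Sigma\bar{A}'+pQ_\omega$.

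I would then settle convergence spectrally. Vectorizing, the linear part acts as $\mathrm{vec}(\Sigma)\mapsto p(\bar{A}\otimes\bar{A})\,\mathrm{vec}(\Sigma)$, whose spectral radius is $p\,\rho(\bar{A})^2=p\,|\lambda_{max}(\bar{A})|^2$. A linear recursion $v_{k+1}=Lv_k+b$ converges from every $v_0$ to a unique fixed point exactly when $\rho(L)<1$; here this is $\sqrt{p}\,|\lambda_{max}(A-B^L\Lambda^{-1}M)|<1$. In that regime $(I-p(\bar{A}\otimes\bar{A}))$ is invertible, which gives uniqueness of the limit $\Sigma$, and the fixed-point identity yields $\Sigma\geq pQ_\omega>0$, so the limit is positive definite.

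I expect the main obstacle to be the \emph{only if} direction together with the positive-definiteness claim. For necessity one must show that when $\sqrt{p}\,|\lambda_{max}(\bar{A})|\geq1$ the recursion fails to converge to a finite limit for generic initial data, which requires isolating a non-decaying mode of $p(\bar{A}\otimes\bar{A})$ and ruling out accidental cancellation against the forcing term $pQ_\omega$. For the limit to be strictly positive definite rather than merely semidefinite one needs $Q_\omega>0$, or else reachability of the pair $(\sqrt{p}\,\bar{A},(pQ_\omega)^{1/2})$; I would invoke Assumption 1 (and, if needed, Assumption 2) to secure this. The invertibility of $\Lambda$ and the very definition of $\bar{A}$ follow from the steady-state Riccati data furnished by Theorem 2 under the assumed stabilizability, so those points are routine.
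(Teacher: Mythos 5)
Your proposal reaches the paper's own starting point by the same route: the paper's Appendix D likewise obtains $\tilde{x}_k=(1-\eta_k)[(A-B^L\Lambda^{-1}M)\tilde{x}_{k-1}+\omega_k]$ from (\ref{l}) and (\ref{57}), and then the identical covariance recursion $\Sigma_k=p\bar{A}\Sigma_{k-1}\bar{A}'+pQ_\omega$, where $\bar{A}=A-B^L\Lambda^{-1}M$. The difference lies in how convergence is settled: the paper unrolls the recursion into
\begin{align}
\nonumber \Sigma_k=p^k\bar{A}^k\Sigma_0(\bar{A}')^k+\sum_{i=1}^kp^i\bar{A}^{i-1}Q_{\omega}(\bar{A}')^{i-1},
\end{align}
shows the first term vanishes and the series converges when $\sqrt{p}|\lambda_{max}(\bar{A})|<1$, and for necessity simply asserts that otherwise both terms blow up; you instead vectorize and invoke the fixed-point criterion $\rho(p\,\bar{A}\otimes\bar{A})=p\,\rho(\bar{A})^2<1$ for affine iterations. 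The two are equivalent in substance, but your spectral framing is cleaner and exposes two soft spots that the paper glosses over. For necessity, the paper's ``readily obtained'' divergence of the driving sum can fail when $Q_\omega$ does not excite the non-decaying mode --- exactly the cancellation you flag --- and, separately, your ``converges from every $v_0$'' criterion quantifies over all matrices while the lemma only quantifies over positive semi-definite initial covariances. Both issues are closed at once by a step neither you nor the paper writes down: take two trajectories started from different positive semi-definite $\Sigma_0$'s; their difference obeys the homogeneous recursion, so convergence to a common limit forces $p^k\bar{A}^kS(\bar{A}')^k\to0$ for every symmetric $S$ (differences of positive semi-definite matrices span the symmetric matrices), and taking $S=I$ gives $\|(\sqrt{p}\bar{A})^k\|\to0$, hence $\sqrt{p}|\lambda_{max}(\bar{A})|<1$, the forcing term having cancelled identically. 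On positive definiteness of the limit you are right that $Q_\omega>0$ (or reachability of the pair $(\sqrt{p}\bar{A},\sqrt{p}\,Q_\omega^{1/2})$) is genuinely needed, but your hope that Assumptions 1--2 supply it is misplaced: those assumptions constrain $Q$, $R^L$, $R^R$ and the observability of $(A,Q^{1/2})$ and say nothing about $Q_\omega$. Note that the paper's proof shares this last gap, since it only establishes convergence to the solution $P$ of the Lyapunov equation and never shows $P>0$.
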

\begin{proof}
The proof is put into Appendix D.
\end{proof}
\begin{coro}
Consider the system (\ref{1}) with the following infinite horizon cost function:
\begin{align}
\tilde{J}\hspace{-0.8mm}=\hspace{-0.8mm}\lim_{N\to\infty}\hspace{-0.8mm}\frac{1}{N}E\sum_{k=0}^N\hspace{-0.8mm}\bigg[{x_k}'Qx_k\hspace{-0.8mm}+\hspace{-0.8mm}(u^L_k)'R^Lu^L_k\hspace{-0.8mm}+\hspace{-0.8mm}(u^R_k)'R^Ru^R_k\bigg]\hspace{-0.8mm}.\label{55}
\end{align}
Under Assumptions 1 and 2, if there exist the unique solutions $Z$ and $X$ to the Riccati equations (\ref{24})-(\ref{36}) and $\sqrt{p}|\lambda_{max}(A-B^L\Lambda^{-1}M)|<1$, then the system (\ref{1}) is bounded in the mean-square sense.

In this case, the optimal controllers
\begin{align}
\check{u}^R_k&=-\begin{bmatrix}0&I\end{bmatrix}K\hat{x}_{k|k},\label{56}\\
\check{u}^L_k&=-\begin{bmatrix}I&0\end{bmatrix}K\hat{x}_{k|k}-\Lambda^{-1}M\tilde{x}_{k},\label{57}
\end{align}
make the system (\ref{1}) bounded in the mean square sense and minimize the cost function (\ref{55}). The optimal cost function is given by
\begin{align}
\tilde{J}^*&=p Tr(XQ_{\omega})+(1-p)Tr(ZQ_{\omega}),\label{58}
\end{align}
where $Q_{\omega}$ is the covariance of the additive noise $\omega_k$.
\end{coro}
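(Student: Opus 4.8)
The plan is to close the loop in (\ref{1}) with the stationary controllers (\ref{56})--(\ref{57}) and to reduce the analysis to two decoupled covariance recursions, one for the estimate $\hat{x}_{k|k}$ and one for the error $\tilde{x}_k$. First I would substitute (\ref{56})--(\ref{57}) into (\ref{1}); using $\begin{bmatrix}B^L&B^R\end{bmatrix}K=B^L\begin{bmatrix}I&0\end{bmatrix}K+B^R\begin{bmatrix}0&I\end{bmatrix}K$ and the decomposition $x_k=\hat{x}_{k|k}+\tilde{x}_k$, the closed loop reads $x_{k+1}=\bar{A}\hat{x}_{k|k}+\bar{F}\tilde{x}_k+\omega_k$ with $\bar{A}=A-\begin{bmatrix}B^L&B^R\end{bmatrix}K$ and $\bar{F}=A-B^L\Lambda^{-1}M$. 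Since $E[\tilde{x}_k\mid\mathcal{F}\{Y_k\}]=0$ and $E[\omega_k\mid\mathcal{F}\{Y_k\}]=0$, we obtain $\hat{x}_{k+1|k}=\bar{A}\hat{x}_{k|k}$, and then (\ref{80}) yields the closed-loop recursions $\hat{x}_{k+1|k+1}=\bar{A}\hat{x}_{k|k}+\eta_{k+1}(\bar{F}\tilde{x}_k+\omega_k)$ and $\tilde{x}_{k+1}=(1-\eta_{k+1})(\bar{F}\tilde{x}_k+\omega_k)$.

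Next I would bound the second moments. Because $\hat{x}_{k|k}$ is $\mathcal{F}\{Y_k\}$-measurable while $E[\tilde{x}_k\mid\mathcal{F}\{Y_k\}]=0$, the estimate and the error are orthogonal, and $\eta_{k+1},\omega_k$ are independent of the history; hence every cross-covariance vanishes and, writing $\Sigma_k=E[\tilde{x}_k\tilde{x}_k']$ and $\hat{\Sigma}_k=E[\hat{x}_{k|k}\hat{x}_{k|k}']$, the recursions decouple into $\Sigma_{k+1}=p(\bar{F}\Sigma_k\bar{F}'+Q_{\omega})$ and $\hat{\Sigma}_{k+1}=\bar{A}\hat{\Sigma}_k\bar{A}'+(1-p)(\bar{F}\Sigma_k\bar{F}'+Q_{\omega})$. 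By Lemma 2 the hypothesis $\sqrt{p}\,|\lambda_{max}(\bar{F})|<1$ forces $\Sigma_k$ to a finite positive-definite limit, so the forcing $(1-p)(\bar{F}\Sigma_k\bar{F}'+Q_{\omega})$ is bounded; moreover the unique Riccati solutions make system (\ref{16}) mean-square stabilizable by Theorem 2, whence $\bar{A}$ is Schur stable under Assumptions 1--2. A standard Lyapunov estimate then keeps $\hat{\Sigma}_k$ bounded, and since $E[x_k'x_k]=Tr(\hat{\Sigma}_k)+Tr(\Sigma_k)$ by orthogonality, $\sup_k E[x_k'x_k]<\infty$; that is, (\ref{1}) is bounded in the mean-square sense.

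For the optimal average cost I would start from the finite-horizon optimal cost (\ref{25}) with $P_{N+1}=0$, divide by $N$, and let $N\to\infty$. The boundary term $E[x_0'Z_0(N)\hat{x}_{0|0}+x_0'X_0(N)\tilde{x}_0]$ is $O(1)$ and so disappears after division by $N$. For the remaining sum I would use that the finite-horizon iterates $Z_k(N),X_k(N)$ depend only on the time-to-go $N-k$ and converge to the algebraic solutions $Z,X$ (from the proof of Theorem 2); since $Q_{\omega_k}\equiv Q_{\omega}$, a Ces\`aro argument gives $\frac1N\sum_{k=0}^N[p\,Tr(X_{k+1}(N)Q_{\omega})+(1-p)Tr(Z_{k+1}(N)Q_{\omega})]\to p\,Tr(XQ_{\omega})+(1-p)Tr(ZQ_{\omega})$, which is (\ref{58}). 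Optimality then follows from $J_N\ge J^*_N$ for every admissible control: since $P_{N+1}=0$ makes $J_N$ equal to the $N$-th partial sum in (\ref{55}), one gets $\tilde{J}\ge\lim_N\frac1N J^*_N=\tilde{J}^*$, while (\ref{56})--(\ref{57}) attain $\tilde{J}^*$ by the steady-state evaluation above.

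The hard part will be this last step: the convergence of the finite-horizon solutions to the algebraic ones for the nonstandard coupled equation (\ref{26})---whose coupling through $\Psi=(1-p)Z+pX$ blocks the usual monotonicity argument---and the verification that the terms near $k=N$ are negligible in the Ces\`aro mean; for these I would lean on the stabilizing and convergence properties established in Theorem 2 and its appendix. A milder point is confirming that the \emph{constant}-gain controllers (\ref{56})--(\ref{57}), rather than the time-varying finite-horizon gains, realize the same average cost, which is immediate once $\hat{\Sigma}_k$ and $\Sigma_k$ are known to converge and the per-stage cost reaches its steady state.
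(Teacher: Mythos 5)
Your boundedness argument is essentially the paper's own: the paper likewise substitutes (\ref{56})--(\ref{57}) into (\ref{1}), invokes Lemma 2 for the error part, and then proves mean-square stability of the dynamics governed by $\bar{A}=A-\begin{bmatrix}B^L&B^R\end{bmatrix}K$. The only difference in flavor is that the paper reduces to the deterministic system $\beta_{k+1}=\bar{A}\beta_k$ and runs a Lyapunov argument with $W(k,\beta_k)=E[\beta_k'Z\beta_k]$ and the rewritten Riccati equation (\ref{62}), using Assumptions 1--2 (observability) to conclude $E[\beta_k'\beta_k]\to0$, whereas you work with the covariance recursions $\Sigma_{k+1}=p(\bar{F}\Sigma_k\bar{F}'+Q_\omega)$, $\hat{\Sigma}_{k+1}=\bar{A}\hat{\Sigma}_k\bar{A}'+(1-p)(\bar{F}\Sigma_k\bar{F}'+Q_\omega)$ (both correct, and your orthogonality bookkeeping is right) and get Schur stability of $\bar{A}$ from Theorem 2; that inference is legitimate, but note it needs the small extra observation that closed-loop mean-square stability with a deterministic initial state (so $\tilde{x}_k\equiv0$) forces $\bar{A}^kx_0\to0$ for all $x_0$.

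Where you genuinely diverge is the average-cost part, and there your proposal has a soft spot. The paper defines $\tilde{V}(k,x_k)=E[x_k'Z\hat{x}_{k|k}+x_k'X\tilde{x}_k]$, derives the completion-of-squares identity (\ref{64}) valid for \emph{arbitrary} admissible controls, sums over $k$, divides by $N$, and uses boundedness of $\tilde{V}$ to obtain the lower bound and its attainment by (\ref{56})--(\ref{57}) in a single stroke. Your route --- dividing the finite-horizon optimal cost (\ref{25}) by $N$ and using Ces\`aro convergence of the Riccati iterates (available from Theorem 2's necessity proof, since by Theorem 2 existence of the unique solutions is equivalent to stabilizability) --- gives the lower bound $\tilde{J}\geq pTr(XQ_\omega)+(1-p)Tr(ZQ_\omega)$ cleanly. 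But the step you dismiss as ``immediate,'' namely that the \emph{constant-gain} controllers attain exactly this value, is in fact the crux of your approach: convergence of $\hat{\Sigma}_k,\Sigma_k$ alone does not identify the steady-state per-stage cost with $pTr(XQ_\omega)+(1-p)Tr(ZQ_\omega)$. You must verify a trace identity, e.g.\ using the closed-loop forms $Z=\bar{A}'Z\bar{A}+Q+K'\begin{bmatrix}R^L&0\\0&R^R\end{bmatrix}K$ and $X=\bar{F}'\Psi\bar{F}+Q+M'\Lambda^{-1}R^L\Lambda^{-1}M$ together with the stationary covariance equations:
\begin{align}
\nonumber &Tr\big[(Z-\bar{A}'Z\bar{A})\hat{\Sigma}\big]+Tr\big[(X-\bar{F}'\Psi\bar{F})\Sigma\big]\\
\nonumber &=(1-p)Tr\big[Z(\bar{F}\Sigma\bar{F}'+Q_\omega)\big]+Tr[X\Sigma]-Tr\big[\Psi\bar{F}\Sigma\bar{F}'\big],
\end{align}
and then $\bar{F}\Sigma\bar{F}'=\tfrac{1}{p}\Sigma-Q_\omega$ and $\Psi=(1-p)Z+pX$ make everything collapse to $Tr[\Psi Q_\omega]=pTr(XQ_\omega)+(1-p)Tr(ZQ_\omega)$. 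So the gap is fillable in a few lines, but as written your proof defers precisely the computation that the paper's completion-of-squares identity (\ref{64}) performs; either carry out this trace calculation or adopt the paper's identity for the attainment step.
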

\begin{proof}
The proof is put into Appendix E.
\end{proof}
\section{Numerical Examples}
The UAV systems have recently received significant attention in the controls community due to its numerous applications, including space science missions, surveillance, terrain mapping and formation flight \cite{R36} and \cite{R37}. The UAV systems have considerable advantages, such as reducing the energy cost, improving the aviation safety and so on. Besides, the UAV are used because they can outperform human pilots, remove humans from dangerous situations, and perform repetitive tasks that can be automated. In this section, we consider a simple UAV system as an example of the system model of Section II.

Consider a simple UAV system of a unmanned aerial vehicle and a ground-control center which is depicted as in Fig.~\ref{fig2}.
 \begin{figure}[htbp]
  \begin{center}
  \includegraphics[width=0.42\textwidth]{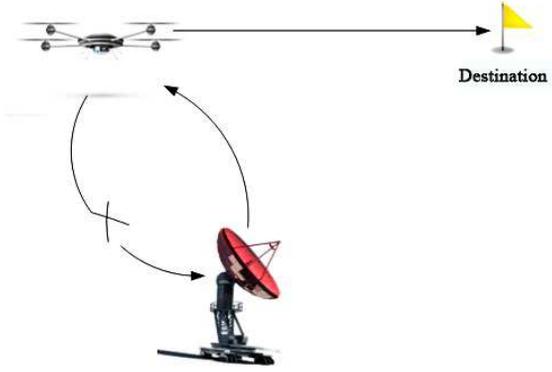}
  \caption{Overview of the UAV model.} \label{fig2}
  \end{center}
\end{figure}
Denote $\varsigma_k$ and $\upsilon_k$ as the location and the velocity of the unmanned aerial vehicle at time $k$ (for simplicity, we assume that the unmanned aerial vehicle files in the straight line). Accordingly, at time $k+1$, the location of the unmanned aerial vehicle can be written as
\begin{align}
\nonumber\varsigma_{k+1}=\varsigma_k+\upsilon_k+\omega_k,
\end{align}
where $\omega_k$ denotes the interference during the flight, e.g. wind. The initial location $\varsigma_0$ and $\omega_k$ are Gaussian and independent, with mean $(\bar{\varsigma}_0,0)$ and covariance $(\bar{P}_0,\sigma)$ respectively.

At any time $k$, the unmanned aerial vehicle can perfectly observe its location. Meanwhile, the observed location is sent to the ground-control center through an unreliable communication channel with the packet dropout probability $p$. Then, the ground-control center sends the control command to the unmanned aerial vehicle as well as the acknowledgement whether it receives the location information. The unmanned aerial vehicle makes a local decision about its velocity based on the local observation and the received information from the ground-control center. Due to the sufficient equipment of the ground-control center, the communication channel from the ground-control center to the unmanned aerial vehicle is perfect.

The aim of the UAV system is to make the unmanned aerial vehicle reach the Destination $\varsigma$ while the energy cost is minimum. Thus, we denote the above aim by a cost function
\begin{align}
J_N=\sum_{k=0}^NE\left[(\varsigma_k-\varsigma)'Q(\varsigma_k-\varsigma)+\upsilon_k'R\upsilon_k\right],\label{69}
\end{align}
where the first term is the sum of quadratic distance between the real-time location and the Destination $\varsigma$, the second term is the sum of the quadratic real-time velocity, $Q\geq0$ and $R\geq0$ are the weighting coefficients.

This UAV system can be described by applying the NCS model in Section II. Define $x_{k}=\varsigma_k-\varsigma$, $\upsilon^L_k+\upsilon^R_k=\upsilon_k$. Then, the corresponding linear plant is given by
\begin{align}
x_{k+1}=x_{k}+\upsilon^L_k+\upsilon^R_k+\omega_k.\label{70}
\end{align}
Accordingly, ignoring the cross terms, (\ref{69}) can be written as
\begin{align}
\hspace{-0.8mm}J_N\hspace{-0.8mm}=&E\sum_{k=0}^N\hspace{-0.8mm}\bigg[{x_k}'Qx_k\hspace{-0.8mm}+\hspace{-0.8mm}(\upsilon^R_k)'R\upsilon^R_k\hspace{-0.8mm}+\hspace{-0.8mm}(\upsilon_{k}^L)'R\upsilon_{k}^L\bigg],\label{71}
\end{align}
\subsection{Finite horizon case}
By applying Theorem 1, the optimal strategies are derived as follows:
\begin{align}
\nonumber\upsilon^R_k&=-\hspace{-0.8mm}\begin{bmatrix}0&I\end{bmatrix}K_k\hat{x}_{k|k},\\
\nonumber\upsilon^L_k&=-\hspace{-0.8mm}\begin{bmatrix}I&0\end{bmatrix}K_k\hat{x}_{k|k}\hspace{-0.8mm}-\hspace{-0.8mm}\Lambda_k^{-1}M_k\tilde{x}_{k},\\
\nonumber\upsilon_k&=\upsilon^L_k+\upsilon^R_k,
\end{align}
where
\begin{align}
\nonumber\hat{x}_{k|k}&=(1-\eta_k)\hat{x}_{k|k-1}+\eta_kx_k,\\
\nonumber\tilde{x}_{k}&=x_k-\hat{x}_{k|k},
\end{align}
with initial value
\begin{align}
\nonumber \hat{x}_{0|0}&=(1-\eta_0)\bar{x}_0+\eta_0x_0, \\
\nonumber\tilde{x}_{0}&=x_0-[(1-\eta_0)\bar{x}_0+\eta_0x_0].
\end{align}
The optimal cost is as
\begin{align}
\nonumber J^*_N&=E\big[x_0'Z_0\hat{x}_{0|0}+x_0'X_0\tilde{x}_0\big]+\sum_{k=0}^{N}\big[p Tr(X_{k+1}\sigma)\\
               &\quad+(1-p)Tr(Z_{k+1}\sigma)\big].
\end{align}
We consider the system (\ref{70}) with $\bar{\varsigma}_0=0, \varsigma=30, \bar{P}_0=1, \sigma=1$, and (\ref{71}) with $R=5, Q=0.01, P_{N+1}=0, N=100$.

Then, by applying Corollary 2, Fig.~\ref{fig3} and Fig.~\ref{fig4} are portrayed. Fig.~\ref{fig3} indicates the velocity $\upsilon_k$ of the unmanned aerial vehicle for packet dropout probability $p=0, p=0.5, p=1$. It can be seen that there is no huge difference in the velocity of the unmanned aerial vehicle for different values of packet dropout probability $p$. Fig.~\ref{fig4} shows the optimal energy cost $J^*_N$ for different values of $p$. Obviously, the energy cost increases greatly with the packet dropout probability.
\subsection{Infinite horizon case}
The stabilization performance of the UAV system is to be shown. Firstly, we will consider the case without the additive noise $\omega_k$.

Let $p=0.5$ and other variables have the same values as in the finite horizon case. Using Theorem 2, Fig.~\ref{fig5} is drawn as the dynamic behavior of $E[x_k'x_k]$. It can be seen that the regulated state is mean-square stable.

Now, we shall deal with the case with the additive noise $\omega_k$. Let $p=0.6$ and other variables have the same values as in the finite horizon case. By applying Corollary 1, the dynamic behavior of $E[x_k'x_k]$ is presented in Fig.~\ref{fig6} which indicates that the regulated state is mean-square bounded.
\begin{figure}[htbp]
  \begin{center}
  \includegraphics[width=0.42\textwidth]{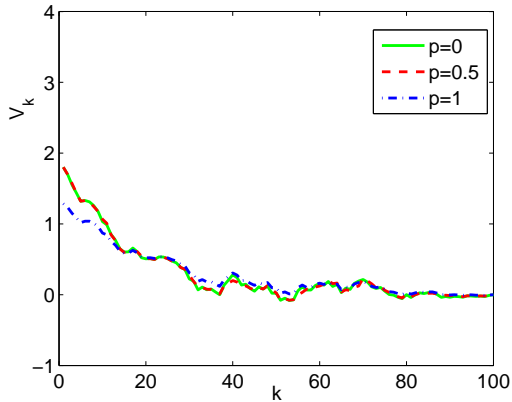}
  \caption{Velocity of the unmanned aerial vehicle for $p=0, p=0.5, p=1$.} \label{fig3}
  \end{center}
\end{figure}
\begin{figure}[htbp]
  \begin{center}
  \includegraphics[width=0.42\textwidth]{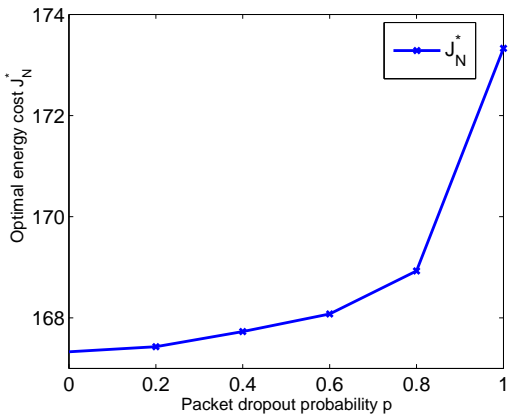}
  \caption{Optimal energy cost for different $p$.} \label{fig4}
  \end{center}
\end{figure}
\begin{figure}[htbp]
  \begin{center}
  \includegraphics[width=0.42\textwidth]{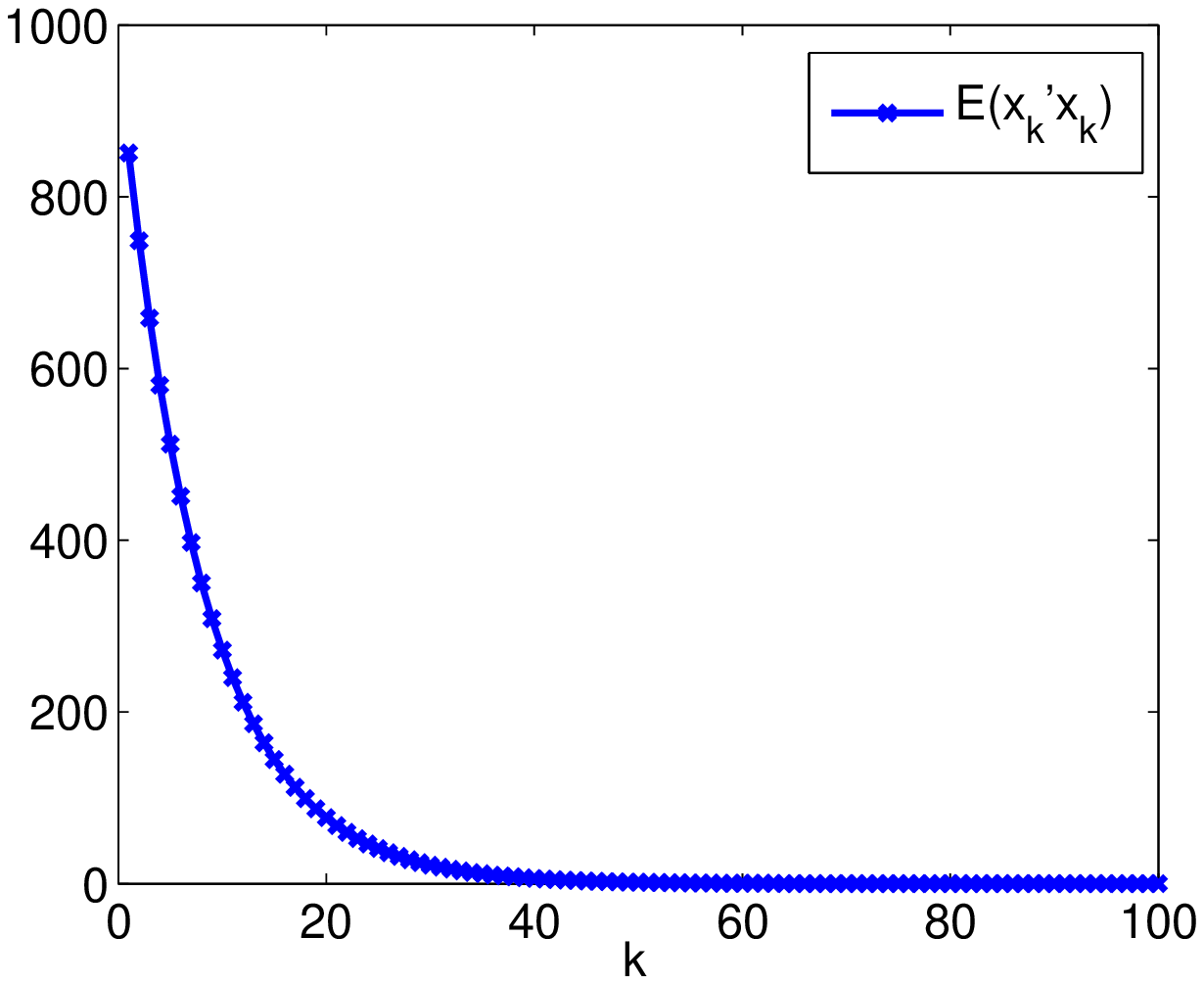}
  \caption{Dynamic Behavior of $E[x_k'x_k]$} \label{fig5}
  \end{center}
\end{figure}
\begin{figure}[htbp]
  \begin{center}
  \includegraphics[width=0.42\textwidth]{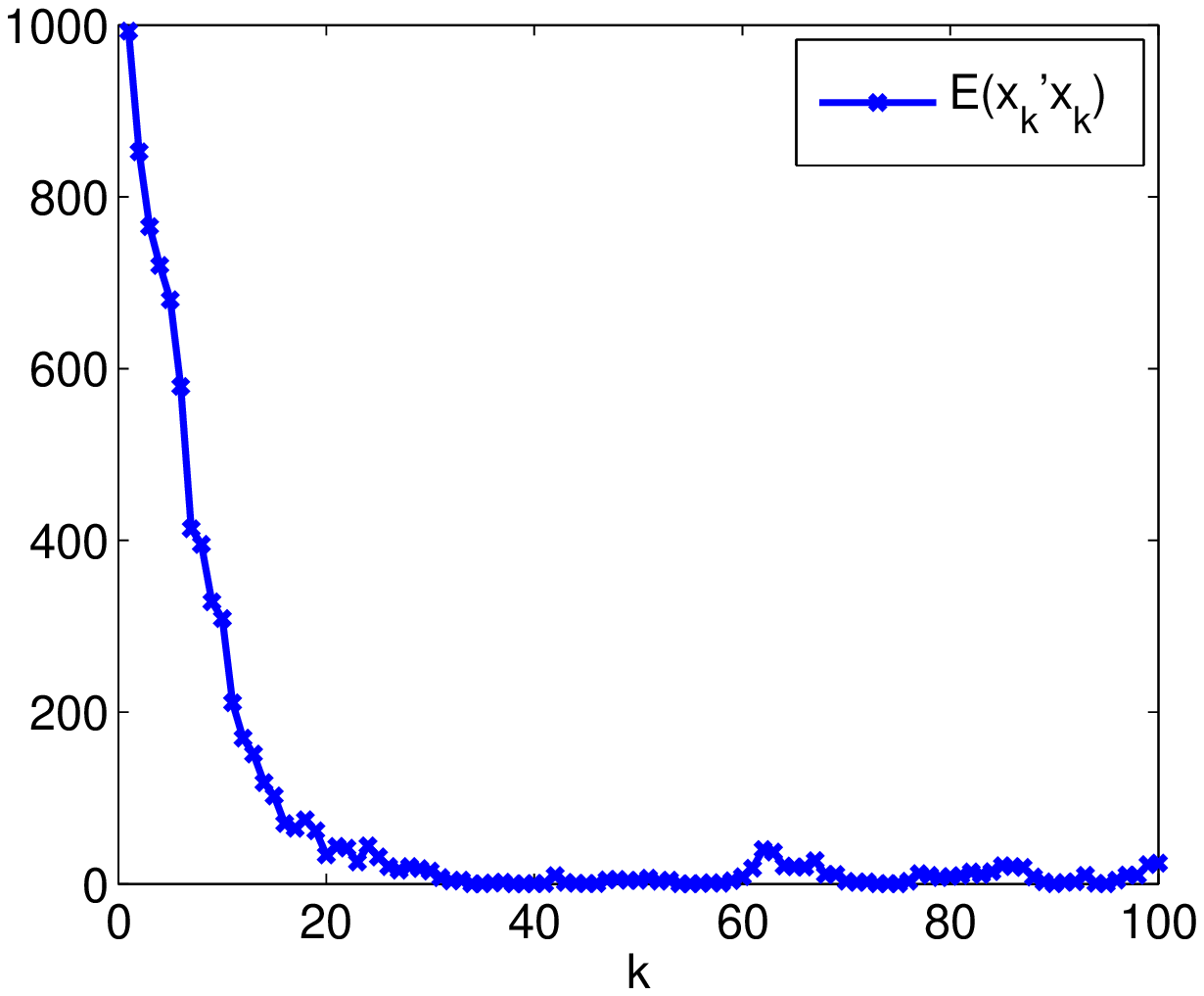}
  \caption{Dynamic Behavior of $E[x_k'x_k]$} \label{fig6}
  \end{center}
\end{figure}
\section{Conclusion}
In this paper, the optimal control and stabilization problems for NCSs have been studied. In NCSs, the linear plant is controlled by the remote controller and the local controller. The local controller perfectly observes the state signal and sends the state signal to the remote controller with packet dropout. Then, the remote controller sends an acknowledge to the local controller. It is stressed that remote control $u^R_k$ is not available for the local control $u^L_k$ at time $k$. By applying the Pontryagin's maximum principle, a non-homogeneous relationship between the state estimation and the costate is developed. Based on this relationship, a necessary and sufficient condition for the finite horizon optimal control problem is given in terms of the solutions to the Riccati equations. For the infinite horizon case, a necessary and sufficient condition for stabilizing the systems without the additive noise in the mean-square sense is developed. For the systems with the additive noise, a sufficient condition is derived for the boundedness in the mean-square sense of the systems. Furthermore, we apply the obtained results to a simple UAV system which shows the effectiveness of this algorithm.

\begin{ack}                               
The authors would like to thank Prof. Huanshui Zhang for his valuable discussions.
\end{ack}

\bibliographystyle{plain}        
\bibliography{autosam}           

\appendix
\section{Proof of Lemma 1}
\begin{proof}
Before proceeding the proof of Lemma 1, we will firstly introduce the following definition. Noting that the local controller $u^L_k$ has access to the states $\{x_0,\ldots,x_k\}$ and the signals $\{y_0,\ldots,y_k\}$, we define
\begin{align}
\hat{u}^L_k=E[u^L_k|\mathcal{F}\{Y_k\}],\label{a}
\end{align}
and
\begin{align}
\tilde{u}^L_k=u^L_k-\hat{u}^L_k.\label{b}
\end{align}
Obviously, we have that
\begin{align}
E[\tilde{u}^L_k|\mathcal{F}\{Y_k\}]\hspace{-0.8mm}=\hspace{-0.8mm}0,E[\tilde{u}^L_k|\mathcal{F}_k]\hspace{-0.8mm}=\hspace{-0.8mm}\tilde{u}^L_k,E[\hat{u}^L_k|\mathcal{F}_k]\hspace{-0.8mm}=\hspace{-0.8mm}\hat{u}^L_k.\label{c}
\end{align}
\emph{Since the local controller $u^L_k$ cannot obtain the remote controller $u^R_k$ at the same time $k$,} (\ref{a})-(\ref{c}) are very useful in the following derivation. Next, we shall rewrite the costate equations (\ref{13}) and (\ref{6}).

Taking mathematical expectation on both sides of (\ref{13}) with $\mathcal{F}\{Y_k\}$, it yields that
\begin{align}
\nonumber0&=E\left[(B^L)'\hspace{-0.8mm}\lambda_k|\mathcal{F}_k|\mathcal{F}\{Y_k\}\right]\hspace{-0.8mm}+\hspace{-0.8mm}E\left[R^Lu^L_k|\mathcal{F}\{Y_k\}\right]\\
          &=E\left[(B^L)'\hspace{-0.8mm}\lambda_k|\mathcal{F}\{Y_k\}\right]\hspace{-0.8mm}+R^L\hat{u}^L_k,\label{d}
\end{align}
which implies that $R^L\hat{u}^L_k=-E\left[(B^L)'\hspace{-0.8mm}\lambda_k|\mathcal{F}\{Y_k\}\right]$. Then noting (\ref{b}), (\ref{13}) can be rewritten as
\begin{align}
\nonumber0&=E\left[(B^L)'\hspace{-0.8mm}\lambda_k|\mathcal{F}_k\right]\hspace{-0.8mm}+\hspace{-0.8mm}R^L\tilde{u}^L_k+\hspace{-0.8mm}R^L\hat{u}^L_k,\\
          &=E\left[(B^L)'\hspace{-0.8mm}\lambda_k|\mathcal{F}_k\hspace{-0.8mm}\right]\hspace{-0.8mm}-\hspace{-0.8mm}E\left[(B^L)'\hspace{-0.8mm}\lambda_k|\mathcal{F}\{Y_k\}\hspace{-0.8mm}\right]\hspace{-0.8mm}+\hspace{-0.8mm}R^L\tilde{u}^L_k.\label{e}
\end{align}
Augmented with (\ref{6}) and (\ref{d}), we have that
\begin{align}
0&=E\left[\begin{bmatrix}B^L&B^R\end{bmatrix}'\hspace{-0.8mm}\lambda_k|\mathcal{F}\{Y_k\}\right]\hspace{-0.8mm}+\hspace{-0.8mm}\begin{bmatrix}R^L&0\\0&R^R\end{bmatrix}\begin{bmatrix}\hat{u}^L_k\\u^R_k\end{bmatrix}\hspace{-0.8mm}.\label{f}
\end{align}
Thus, the costate equations (\ref{13}) and (\ref{6}) can be rewritten as (\ref{e}) and (\ref{f}).

By virtue of (\ref{b}), system (\ref{1}) can be written as
\begin{align}
x_{k+1}=Ax_k+\begin{bmatrix}B^L&B^R\end{bmatrix}\begin{bmatrix}\hat{u}^L_k\\u^R_k\end{bmatrix}+B^L\tilde{u}^L_k+\omega_k,\label{h}
\end{align}

Next, we will show by induction that $\lambda_{k-1}$ has the form as (\ref{31}) for all $k=N+1,\ldots,0$.

Firstly, noting (\ref{5}), (\ref{22}) and $Z_{N+1}=P_{N+1}$, $X_{N+1}=P_{N+1}$, it is obviously that (\ref{31}) holds for $k=N+1$.

For $k=N$, by making use of (\ref{h}), (\ref{5}) and (\ref{c}), (\ref{f}) becomes
\begin{align}
\nonumber0&=E\hspace{-0.8mm}\left[\begin{bmatrix}B^L&\hspace{-0.8mm}B^R\end{bmatrix}'P_{N+1}x_{N+1}|\mathcal{F}\{Y_N\}\right]\hspace{-0.8mm}+\hspace{-0.8mm}\begin{bmatrix}R^L&0\\0&R^R\end{bmatrix}\hspace{-0.8mm}\begin{bmatrix}\hat{u}^L_N\\u^R_N\end{bmatrix}\\
\nonumber&=\begin{bmatrix}B^L&B^R\end{bmatrix}'P_{N+1}A\hat{x}_{N|N}+\begin{bmatrix}R^L&0\\0&R^R\end{bmatrix}\begin{bmatrix}\hat{u}^L_N\\u^R_N\end{bmatrix}\\
\nonumber&\quad+\begin{bmatrix}B^L&B^R\end{bmatrix}'P_{N+1}\begin{bmatrix}B^L&B^R\end{bmatrix}\begin{bmatrix}\hat{u}^L_N\\u^R_N\end{bmatrix}.
\end{align}
Hence, the optimal controller $\begin{bmatrix}\hat{u}^L_N\\u^R_N\end{bmatrix}$ is given by
\begin{align}
\begin{bmatrix}\hat{u}^L_N\\u^R_N\end{bmatrix}&=\hspace{-0.8mm}-\Upsilon_N^{-1}\begin{bmatrix}B^L\hspace{-0.8mm}&\hspace{-0.8mm}B^R\end{bmatrix}'\hspace{-0.8mm}P_{N+1}A\hat{x}_{N|N}=\hspace{-0.8mm}-K_N\hat{x}_{N|N}.\label{15}
\end{align}
Using (\ref{h}), (\ref{5}) and (\ref{c}), we have (\ref{e}) as
\begin{align}
\nonumber0&=E\left[(B^L)'P_{N+1}x_{N+1}|\mathcal{F}_k\right]\hspace{-0.8mm}\\
\nonumber &\quad-E\left[(B^L)'\hspace{-0.8mm}P_{N+1}x_{N+1}|\mathcal{F}\{Y_k\}\right]+R^L\tilde{u}^L_N\\
\nonumber &=(B^L)'P_{N+1}(Ax_k+\begin{bmatrix}B^L&B^R\end{bmatrix}\begin{bmatrix}\hat{u}^L_k\\u^R_k\end{bmatrix}+B^L\tilde{u}^L_k)\\
\nonumber &\quad-(B^L)'P_{N+1}(A\hat{x}_{k|k}+\begin{bmatrix}B^L&B^R\end{bmatrix}\begin{bmatrix}\hat{u}^L_k\\u^R_k\end{bmatrix})+R^L\tilde{u}^L_N\\
\nonumber &=(B^L)'P_{N+1}A\tilde{x}_{N}+(B^L)'P_{N+1}B^L\tilde{u}^L_N+R^L\tilde{u}^L_N.
\end{align}
Thus, the optimal controller $\tilde{u}^L_N$ is derived as
\begin{align}
\nonumber \tilde{u}^L_N&=-\left[(B^L)'P_{N+1}B^L+R^L\right]^{-1}(B^L)'P_{N+1}A\tilde{x}_{N}\\
               &=-\Lambda_N^{-1}M_N\tilde{x}_N.\label{20}
\end{align}
By applying (\ref{h}), (\ref{5}), (\ref{c}), (\ref{15}) and (\ref{20}), it follows from (\ref{12}) that
\begin{align}
\nonumber\lambda_{N-1}&=E\left[A'\lambda_N|\mathcal{F}_N\right]+Qx_{N}\\
\nonumber             &=E\left[A'P_{N+1}x_{N+1}|\mathcal{F}_N\right]+Qx_{N}\\
\nonumber&=A'P_{N+1}Ax_{N}+A'P_{N+1}\begin{bmatrix}B^L\hspace{-1mm}&\hspace{-1mm}B^R\end{bmatrix}\hspace{-0.8mm}\begin{bmatrix}\hat{u}^L_N\\u^R_N\end{bmatrix}\\
\nonumber&\quad+A'P_{N+1}B^L\tilde{u}^L_N+Q\hat{x}_{N|N}+Q\tilde{x}_{N}\\
\nonumber&=\left(A'P_{N+1}A\hspace{-0.8mm}-\hspace{-0.8mm}A'P_{N+1}\hspace{-0.8mm}\begin{bmatrix}B^L&B^R\end{bmatrix}K_N\hspace{-0.8mm}+\hspace{-0.8mm}Q\right)\hat{x}_{N|N}\\
\nonumber&\quad+\left(A'P_{N+1}A-A'P_{N+1}B^L\Lambda_N^{-1}M_N+Q\right)\tilde{x}_{N}.
\end{align}
Noting (\ref{9}) and (\ref{19}), $\lambda_{N-1}$ can be written as
\begin{align}
\nonumber\lambda_{N-1}&=Z_N\hat{x}_{N|N}+X_N\tilde{x}_{N},
\end{align}
which implies that (\ref{31}) holds for $k=N$.

To complete the induction proof, we take any $n$ with $0\leq n\leq N$ and assume that $\lambda_{k-1}$ are as (\ref{31}) for all $k\geq n+1$. We shall show that (\ref{31}) also holds for $k=n$.

Using (\ref{31}), and letting $k=n+1$, $\lambda_n$ can be written as
\begin{align}
 \lambda_n&=Z_{n+1}\hat{x}_{n+1|n+1}+X_{n+1}\tilde{x}_{n+1}.\label{j}
\end{align}
By virtue of (\ref{80}), (\ref{h}) and (\ref{c}), $\hat{x}_{n+1|n+1}$ can be calculated as follows,
\begin{align}
\nonumber\hat{x}_{n+1|n+1}&=\eta_{n+1}x_{n+1}\hspace{-0.8mm}+\hspace{-0.8mm}(1\hspace{-0.8mm}-\hspace{-0.8mm}\eta_{n+1})\hat{x}_{n+1|n}\\
\nonumber &=\eta_{n+1}\left(Ax_n\hspace{-0.8mm}+\hspace{-0.8mm}\begin{bmatrix}B^L&B^R\end{bmatrix}\begin{bmatrix}\hat{u}^L_n\\u^R_n\end{bmatrix}\hspace{-0.8mm}+\hspace{-0.8mm}B^L\tilde{u}^L_n\hspace{-0.8mm}+\hspace{-0.8mm}\omega_n\right)\\
\nonumber&\quad+(1-\eta_{n+1})\left(A\hat{x}_{n|n}+\begin{bmatrix}B^L&B^R\end{bmatrix}\begin{bmatrix}\hat{u}^L_n\\u^R_n\end{bmatrix}\right)\\
\nonumber&=\eta_{n+1}A\tilde{x}_n+A\hat{x}_{n|n}+\begin{bmatrix}B^L&B^R\end{bmatrix}\begin{bmatrix}\hat{u}^L_n\\u^R_n\end{bmatrix}\\
&\quad+\eta_{n+1}B^L\tilde{u}^L_n+\eta_{n+1}\omega_n.\label{k}
\end{align}
With (\ref{22}), (\ref{h}) and (\ref{c}), it can be obtained that
\begin{align}
\nonumber\tilde{x}_{n+1}&=x_{n+1}-\hspace{-0.8mm}\big[\eta_{n+1}x_{n+1}\hspace{-0.8mm}+\hspace{-0.8mm}(1\hspace{-0.8mm}-\hspace{-0.8mm}\eta_{n+1})\hat{x}_{n+1|n}\big]\\
\nonumber               &=Ax_n\hspace{-0.8mm}+\hspace{-0.8mm}\begin{bmatrix}B^L&B^R\end{bmatrix}\begin{bmatrix}\hat{u}^L_n\\u^R_n\end{bmatrix}\hspace{-0.8mm}+\hspace{-0.8mm}B^L\tilde{u}^L_n\hspace{-0.8mm}+\hspace{-0.8mm}\omega_n\\
\nonumber&\quad-\bigg[\eta_{n+1}\left(Ax_n\hspace{-0.8mm}+\hspace{-0.8mm}\begin{bmatrix}B^L&B^R\end{bmatrix}\begin{bmatrix}\hat{u}^L_n\\u^R_n\end{bmatrix}\hspace{-0.8mm}+\hspace{-0.8mm}B^L\tilde{u}^L_n\hspace{-0.8mm}+\hspace{-0.8mm}\omega_n\right)\\
\nonumber&\quad+(1-\eta_{n+1})\left(A\hat{x}_{n|n}+\begin{bmatrix}B^L&B^R\end{bmatrix}\begin{bmatrix}\hat{u}^L_n\\u^R_n\end{bmatrix}\right)\bigg]\\
&=(1-\eta_{n+1})(A\tilde{x}_n+B^L\tilde{u}^L_n+\omega_n).\label{l}
\end{align}
Thus, substituting (\ref{k}) and (\ref{l}) into (\ref{j}), we have that
\begin{align}
\nonumber\lambda_n&=Z_{n+1}\bigg(\eta_{n+1}A\tilde{x}_n+A\hat{x}_{n|n}+\begin{bmatrix}B^L&B^R\end{bmatrix}\begin{bmatrix}\hat{u}^L_n\\u^R_n\end{bmatrix}\\
\nonumber&\qquad\qquad+\eta_{n+1}B^L\tilde{u}^L_n+\eta_{n+1}\omega_n\bigg)\\
         &\quad+X_{n+1}\big[(1\hspace{-0.8mm}-\hspace{-0.8mm}\eta_{n+1})(A\tilde{x}_n+B^L\tilde{u}^L_n+\hspace{-0.8mm}\omega_n)\big].\label{32}
\end{align}
Plugging (\ref{32}) into (\ref{f}), and using (\ref{c}), it yields that
\begin{align}
\nonumber0&=E\left[\begin{bmatrix}B^L&B^R\end{bmatrix}'\hspace{-0.8mm}\lambda_n|\mathcal{F}\{Z_n\}\right]\hspace{-0.8mm}+\hspace{-0.8mm}\begin{bmatrix}R^L&0\\0&R^R\end{bmatrix}\hspace{-0.8mm}\begin{bmatrix}\hat{u}^L_n\\u^R_n\end{bmatrix}\\
\nonumber&=\begin{bmatrix}B^L&\hspace{-0.8mm}B^R\end{bmatrix}'\hspace{-0.8mm}Z_{n+1}A\hat{x}_{n|n}+\hspace{-0.8mm}\begin{bmatrix}R^L\hspace{-0.8mm}&\hspace{-0.8mm}0\\0\hspace{-0.8mm}&\hspace{-0.8mm}R^R\end{bmatrix}\hspace{-0.8mm}\begin{bmatrix}\hat{u}^L_n\\u^R_n\end{bmatrix}\\
\nonumber&\quad+\begin{bmatrix}B^L&\hspace{-0.8mm}B^R\end{bmatrix}'\hspace{-0.8mm}Z_{n+1}\begin{bmatrix}B^L\hspace{-0.8mm}&\hspace{-0.8mm}B^R\end{bmatrix}\begin{bmatrix}\hat{u}^L_n\\u^R_n\end{bmatrix}.
\end{align}
The optimal controller $\begin{bmatrix}\hat{u}^L_n\\u^R_n\end{bmatrix}$ is derived as
\begin{align}
\begin{bmatrix}\hat{u}^L_n\\u^R_n\end{bmatrix}&\hspace{-0.8mm}=\hspace{-0.8mm}-\hspace{-0.8mm}\Upsilon_n^{-1}\begin{bmatrix}B^L&\hspace{-0.8mm}B^R\end{bmatrix}'\hspace{-0.8mm}Z_{n+1}A\hat{x}_{n|n}\hspace{-0.8mm}=\hspace{-0.8mm}-K_n\hat{x}_{n|n}.\label{17}
\end{align}
On the other hand, substituting (\ref{32}) into (\ref{e}) and using (\ref{c}), we get
\begin{align}
\nonumber0&=E\left[(B^L)'\hspace{-0.8mm}\lambda_n|\mathcal{F}_n\right]\hspace{-0.8mm}-E\left[(B^L)'\hspace{-0.8mm}\lambda_n|\mathcal{F}\{Z_n\}\right]\hspace{-0.8mm}+\hspace{-0.8mm}R^L\tilde{u}^L_n\\
\nonumber&=(1-p)(B^L)'Z_{n+1}A\tilde{x}_n+(1-p)(B^L)'Z_{n+1}B^L\tilde{u}^L_n\\
\nonumber&\quad+p(B^L)'X_{n+1}A\tilde{x}_n\hspace{-0.8mm}+p(B^L)'X_{n+1}B^L\tilde{u}^L_n\hspace{-0.8mm}+R^L\tilde{u}^L_n.
\end{align}
Thus, the optimal controller $\tilde{u}^L_n$ is given by
\begin{align}
\nonumber \tilde{u}^L_n&=-\big[p(B^L)'X_{n+1}B^L\hspace{-0.8mm}+\hspace{-0.8mm}(1\hspace{-0.8mm}-\hspace{-0.8mm}p)(B^L)'Z_{n+1}B^L\hspace{-0.8mm}+\hspace{-0.8mm}R^L\big]^{-1}\\
\nonumber      &\quad\times\big[p(B^L)'X_{n+1}A+(1-p)(B^L)'Z_{n+1}A\big]\tilde{x}_n\\
               &=-\Lambda_n^{-1}M_n\tilde{x}_{n}.\label{23}
\end{align}
Now we show that for $k=n$, $\lambda_{n-1}$ is as the form of (\ref{31}). Using (\ref{12}) and (\ref{32}), it yields that
\begin{align}
\nonumber\lambda_{n-1}&=E\left[A'\lambda_n|\mathcal{F}_n\right]+Qx_{n}\\
\nonumber&=E\bigg\{A'Z_{n+1}\bigg(\eta_{n+1}A\tilde{x}_n\hspace{-0.8mm}+\hspace{-0.8mm}A\hat{x}_{n|n}\hspace{-0.8mm}+\hspace{-0.8mm}\begin{bmatrix}B^L&B^R\end{bmatrix}\begin{bmatrix}\hat{u}^L_n\\u^R_n\end{bmatrix}\\
\nonumber&\quad+\eta_{n+1}B^L\tilde{u}^L_n+\eta_{n+1}\omega_n\bigg)+A'X_{n+1}\big[(1-\eta_{n+1})\\
\nonumber&\quad\times(A\tilde{x}_n+B^L\tilde{u}^L_n+\omega_n)\big]|\mathcal{F}_n\bigg\}+Qx_{n}\\
\nonumber&=A'Z_{n+1}A\hat{x}_{n|n}+\hspace{-0.8mm}A'Z_{n+1}\begin{bmatrix}B^L&B^R\end{bmatrix}\begin{bmatrix}\hat{u}^L_n\\u^R_n\end{bmatrix}\hspace{-0.8mm}\hspace{-0.8mm}+\hspace{-0.8mm}Q\hat{x}_{n|n}\\
\nonumber&\quad+(1-p)A'Z_{n+1}A\tilde{x}_n+(1-p)A'Z_{n+1}B^L\tilde{u}^L_n\\
\nonumber&\quad+pA'X_{n+1}A\tilde{x}_n+pA'X_{n+1}B^L\tilde{u}^L_n+Q\tilde{x}_n.
\end{align}
Substituting (\ref{17}), (\ref{23}), (\ref{9}) and (\ref{19}) into the above equation, we have
\begin{align}
\nonumber\lambda_{n-1}&=\big(A'Z_{n+1}A-A'Z_{n+1}\begin{bmatrix}B^L\hspace{-0.8mm}&\hspace{-0.8mm}B^R\end{bmatrix}K_n+Q\big)\hat{x}_{n|n}\\
\nonumber&\quad+\big\{pA'X_{n+1}A+(1-p)A'Z_{n+1}A\\
\nonumber&\quad-\hspace{-0.8mm}[pA'X_{n+1}B^L\hspace{-0.8mm}+\hspace{-0.8mm}(1\hspace{-0.8mm}-\hspace{-0.8mm}p)A'Z_{n+1}B^L\hspace{-0.8mm}]\Lambda_n^{-1}M_n\hspace{-0.8mm}+\hspace{-0.8mm}Q\hspace{-0.8mm}\big\}\hspace{-0.8mm}\tilde{x}_n\\
\nonumber&=Z_n\hat{x}_{n|n}+X_n\tilde{x}_n.
\end{align}
Thus (\ref{31}) holds for $k=n$. This completes the proof.
\end{proof}
\section{Proof of Theorem 1}
\begin{proof}``\emph{Necessity}'': Suppose \emph{Problem 1} has the unique solution. We will show by induction that $\Upsilon_k>0$ and $\Lambda_k>0$ for $k=N,\ldots,0$.

Noting (\ref{b}), the cost function (\ref{2}) can be written as
\begin{align}
\nonumber J_N=&E\bigg\{\sum_{k=0}^N\bigg[{x_k}'Qx_k+\begin{bmatrix}\hat{u}^L_k\\u^R_k\end{bmatrix}'\begin{bmatrix}R^L&0\\0&R^R\end{bmatrix}\begin{bmatrix}\hat{u}^L_k\\u^R_k\end{bmatrix}\\
              &\quad\quad+(\tilde{u}^L_{k})'R^L\tilde{u}^L_k\bigg]+{x_{N+1}}'P_{N+1}x_{N+1}\bigg\}.\label{10}
\end{align}
Define
\begin{align}
\nonumber J(k)=&E\bigg\{\sum_{i=k}^N\bigg[{x_i}'Qx_i+\begin{bmatrix}\hat{u}^L_i\\u^R_i\end{bmatrix}'\begin{bmatrix}R^L&0\\0&R^R\end{bmatrix}\begin{bmatrix}\hat{u}^L_i\\u^R_i\end{bmatrix}\\
\nonumber&\quad\quad+(\tilde{u}^L_{i})'R^L\tilde{u}^L_{i}\bigg]+{x_{N+1}}'P_{N+1}x_{N+1}\bigg\}
\end{align}
for $k=0,\ldots,N$.

Firstly, for $k=N$, note that
\begin{align}
\nonumber J(N)&=E\bigg[{x_N}'Qx_N+\begin{bmatrix}\hat{u}^L_N\\u^R_N\end{bmatrix}'\begin{bmatrix}R^L&0\\0&R^R\end{bmatrix}\begin{bmatrix}\hat{u}^L_N\\u^R_N\end{bmatrix}\\
\nonumber&\qquad+(\tilde{u}^L_{N})'R^L\tilde{u}^L_{N}+{x_{N+1}}'P_{N+1}x_{N+1}\bigg].
\end{align}
Using (\ref{h}) and setting $x_N=0$, the above equation can be written as
\begin{align}
\nonumber J(N)&=\begin{bmatrix}\hat{u}^L_N\\u^R_N\end{bmatrix}'\begin{bmatrix}R^L&0\\0&R^R\end{bmatrix}\begin{bmatrix}\hat{u}^L_N\\u^R_N\end{bmatrix}+(\tilde{u}^L_{N})'R^L\tilde{u}^L_{N}\\
\nonumber&+\begin{bmatrix}\hat{u}^L_N\\u^R_N\end{bmatrix}'\begin{bmatrix}B^L&B^R\end{bmatrix}'P_{N+1}\begin{bmatrix}B^L&B^R\end{bmatrix}\begin{bmatrix}\hat{u}^L_N\\u^R_N\end{bmatrix}\\
\nonumber&+(\tilde{u}^L_{N})'(B^L)'P_{N+1}B^L\tilde{u}^L_{N}+Tr(P_{N+1}Q_{\omega_N}).
\end{align}
By applying (\ref{28}) and (\ref{29}), the above equation becomes
\begin{align}
\nonumber J(N)&=\begin{bmatrix}\hat{u}^L_N\\u^R_N\end{bmatrix}'\hspace{-0.8mm}\Upsilon_N\hspace{-0.8mm}\begin{bmatrix}\hat{u}^L_N\\u^R_N\end{bmatrix}\hspace{-0.8mm}+\hspace{-0.8mm}(\tilde{u}^L_{N})'\Lambda_N\tilde{u}^L_{N}\hspace{-0.8mm}+\hspace{-0.8mm}Tr(P_{N+1}Q_{\omega_N})\\
\nonumber&=\begin{bmatrix}\hat{u}^L_N\\u^R_N\\\tilde{u}^L_N\end{bmatrix}'\hspace{-0.8mm}\begin{bmatrix}\Upsilon_N&0&0\\0&\Upsilon_N&0\\0&0&\Lambda_N\end{bmatrix}\hspace{-0.8mm}\begin{bmatrix}\hat{u}^L_N\\u^R_N\\\tilde{u}^L_N\end{bmatrix}\hspace{-0.8mm}+\hspace{-0.8mm}Tr(P_{N+1}Q_{\omega_N}).
\end{align}
The uniqueness of the optimal $\hat{u}^L_N$, $u^R_N$ and $\tilde{u}^L_N$ implies that the quadratic term $\begin{bmatrix}\Upsilon_N&0&0\\0&\Upsilon_N&0\\0&0&\Lambda_N\end{bmatrix}$ is positive for any nonzero $\begin{bmatrix}\hat{u}^L_N\\u^R_N\\\tilde{u}^L_N\end{bmatrix}$. Thus, we have that $\Upsilon_N>0$ and $\Lambda_N>0$.

Next, let any $n$ with $0\leq n\leq N$, and assume that $\Upsilon_k>0$ and $\Lambda_k>0$ for all $k\geq n+1$. We shall show that $\Upsilon_k>0$ and $\Lambda_k>0$ for $k=n$.

Using (\ref{1}), (\ref{12}), (\ref{13}) and (\ref{6}), for $k\geq n+1$, we get
\begin{align}
\nonumber&E[x_k'\lambda_{k-1}-x_{k+1}'\lambda_k]\\
\nonumber&=E\bigg[x_k'E\left[A'\lambda_k|\mathcal{F}_k\right]+x_k'Qx_k-x_k'A'\lambda_k\\
\nonumber&\qquad-(u^R_k)'(B^R)'\lambda_k-(u^L_k)'(B^L)'\lambda_k-\omega_k'\lambda_k\bigg]\\
\nonumber&=E[x_k'Qx_k]-E\left\{(u^R_k)'E\left[(B^R)'\lambda_k|\mathcal{F}\{Y_k\}\right]\right\}\\
\nonumber&\quad-E\left\{({u}^L_k)'E\left[(B^L)'\lambda_k|\mathcal{F}_k\right]\right\}-E[\omega_k'\lambda_k]\\
\nonumber&=E\left[x_k'Qx_k\hspace{-0.8mm}+\hspace{-0.8mm}(u^R_k)'R^Ru^R_k\hspace{-0.8mm}+\hspace{-0.8mm}({u}^L_k)'R^L{u}^L_k\right]-E[\omega_k'\lambda_k].
\end{align}
Taking summation from $k=n+1$ to $k=N$ on both sides of the above equation, it yields that
\begin{align}
\nonumber&E[x_{n+1}'\lambda_{n}-x_{N+1}'\lambda_{N}]\\
\nonumber&=E[x_{n+1}'\lambda_{n}-x_{N+1}'P_{N+1}x_{N+1}]\\
\nonumber&=\hspace{-1mm}\sum_{k=n+1}^N\hspace{-0.8mm}\{\hspace{-0.8mm}E\hspace{-0.8mm}\left[\hspace{-0.8mm}x_k'Qx_k\hspace{-0.8mm}+\hspace{-0.8mm}(u^R_k)'R^Ru^R_k\hspace{-0.8mm}+\hspace{-0.8mm}({u}^L_k)'R^L{u}^L_k\hspace{-0.8mm}\right]\hspace{-0.8mm}-\hspace{-0.8mm}E(\omega_k'\lambda_k)\hspace{-1mm}\}.
\end{align}
Obviously, we obtain
\begin{align}
\nonumber&E[x_{n+1}'\lambda_{n}]=\hspace{-0.8mm}\sum_{k=n+1}^N\hspace{-0.8mm}E\bigg[x_k'Qx_k\hspace{-0.8mm}+\hspace{-0.8mm}(u^R_k)'R^Ru^R_k\hspace{-0.8mm}+\hspace{-0.8mm}({u}^L_k)'R^L{u}^L_k\\
\nonumber&\qquad\qquad\qquad+x_{N+1}'P_{N+1}x_{N+1}\bigg]-\sum_{k=n+1}^NE(\omega_k'\lambda_k).
\end{align}
Applying (\ref{b}), the above equation becomes
\begin{align}
\nonumber&E[x_{n+1}'\lambda_{n}]=\hspace{-0.8mm}\sum_{k=n+1}^N\hspace{-0.8mm}E\bigg[x_k'Qx_k\hspace{-0.8mm}+\hspace{-0.8mm}\begin{bmatrix}\hat{u}^L_k\\u^R_k\end{bmatrix}'\begin{bmatrix}R^L&0\\0&R^R\end{bmatrix}\begin{bmatrix}\hat{u}^L_k\\u^R_k\end{bmatrix}\hspace{-0.8mm}\\
\nonumber&+(\tilde{u}^L_{k})'R^L\tilde{u}^L_{k}+x_{N+1}'P_{N+1}x_{N+1}\bigg]-\sum_{k=n+1}^NE(\omega_k'\lambda_k).
\end{align}
Thus, we have
\begin{align}
\nonumber J(n)&=E\bigg[x_n'Qx_n\hspace{-0.8mm}+\hspace{-0.8mm}\begin{bmatrix}\hat{u}^L_n\\u^R_n\end{bmatrix}'\begin{bmatrix}R^L&0\\0&R^R\end{bmatrix}\hspace{-0.8mm}\begin{bmatrix}\hat{u}^L_n\\u^R_n\end{bmatrix}\hspace{-0.8mm}+\hspace{-0.8mm}(\tilde{u}^L_{n})'R^L\tilde{u}^L_{n}\bigg]\\
\nonumber&\quad+\hspace{-0.8mm}\sum_{k=n+1}^N\hspace{-0.8mm}E\bigg[x_k'Qx_k\hspace{-0.8mm}+\hspace{-0.8mm}\begin{bmatrix}\hat{u}^L_k\\u^R_k\end{bmatrix}'\hspace{-0.8mm}\begin{bmatrix}R^L\hspace{-0.8mm}&\hspace{-0.8mm}0\\0\hspace{-0.8mm}&\hspace{-0.8mm}R^R\end{bmatrix}\begin{bmatrix}\hat{u}^L_k\\u^R_k\end{bmatrix}\hspace{-0.8mm}\\
\nonumber&\quad+(\tilde{u}^L_{k})'R^L\tilde{u}^L_{k}+x_{N+1}'P_{N+1}x_{N+1}\bigg]\\
\nonumber&=E\bigg[x_n'Qx_n\hspace{-0.8mm}+\hspace{-0.8mm}\begin{bmatrix}\hat{u}^L_n\\u^R_n\end{bmatrix}'\begin{bmatrix}R^L&0\\0&R^R\end{bmatrix}\hspace{-0.8mm}\begin{bmatrix}\hat{u}^L_n\\u^R_n\end{bmatrix}\hspace{-0.8mm}+\hspace{-0.8mm}(\tilde{u}^L_{n})'R^L\tilde{u}^L_{n}\bigg]\\
\nonumber&\quad+E[x_{n+1}'\lambda_{n}]+\sum_{k=n+1}^NE(\omega_k'\lambda_k)
\end{align}
Since $\Upsilon_k>0$ and $\Lambda_k>0$ for $k\geq n+1$, noting Lemma 1, we have $\lambda_n=Z_{n+1}\hat{x}_{n+1|n+1}+X_{n+1}\tilde{x}_{n+1}$. Setting $x_n=0$ (thus $\hat{x}_{n|n}=0$ and $\tilde{x}_n=0$) and using (\ref{h}) and (\ref{32}), the above equation becomes
\begin{align}
\nonumber J(n)&=E\bigg[\begin{bmatrix}\hat{u}^L_n\\u^R_n\end{bmatrix}'\begin{bmatrix}R^L&0\\0&R^R\end{bmatrix}\begin{bmatrix}\hat{u}^L_n\\u^R_n\end{bmatrix}\hspace{-0.8mm}+\hspace{-0.8mm}(\tilde{u}^L_n)'R^L\tilde{u}^L_n\bigg]\\
\nonumber&\quad+E\bigg[\begin{bmatrix}\hat{u}^L_n\\u^R_n\end{bmatrix}'\begin{bmatrix}B^L\hspace{-0.8mm}&\hspace{-0.8mm}B^R\end{bmatrix}'\lambda_n\hspace{-0.8mm}+\hspace{-0.8mm}(\tilde{u}^L_n)'(B^L)'\lambda_n\bigg]\\
\nonumber&\quad+\sum_{k=n+1}^NE(\omega_k'\lambda_k)\\
\nonumber&=E\bigg[\begin{bmatrix}\hat{u}^L_n\\u^R_n\end{bmatrix}'\begin{bmatrix}R^L&0\\0&R^R\end{bmatrix}\begin{bmatrix}\hat{u}^L_n\\u^R_n\end{bmatrix}\hspace{-0.8mm}+\hspace{-0.8mm}(\tilde{u}^L_n)'R^L\tilde{u}^L_n\bigg]\\
\nonumber&\quad+E\bigg[\begin{bmatrix}\hat{u}^L_n\\u^R_n\end{bmatrix}'\begin{bmatrix}B^L\hspace{-0.8mm}&\hspace{-0.8mm}B^R\end{bmatrix}'Z_{n+1}\begin{bmatrix}B^L&B^R\end{bmatrix}\begin{bmatrix}\hat{u}^L_n\\u^R_n\end{bmatrix}\\
\nonumber&\quad+(1-p)(\tilde{u}^L_n)'(B^L)'Z_{n+1}B^L\tilde{u}^L_n\\
\nonumber&\quad+p(\tilde{u}^L_n)'(B^L)'X_{n+1}B^L\tilde{u}^L_n\bigg]+\sum_{k=n+1}^NE(\omega_k'\lambda_k).
\end{align}
Making use of (\ref{28}) and (\ref{29}), the above equation can be written as
\begin{align}
\nonumber J(n)&=\begin{bmatrix}\hat{u}^L_n\\u^R_n\end{bmatrix}'\Upsilon_n\begin{bmatrix}\hat{u}^L_n\\u^R_n\end{bmatrix}+(\tilde{u}^L_n)'\Lambda_n\tilde{u}^L_n+\sum_{k=n+1}^NE(\omega_k'\lambda_k)\\
\nonumber&=\begin{bmatrix}\hat{u}^L_n\\u^R_n\\\tilde{u}^L_n\end{bmatrix}'\begin{bmatrix}\Upsilon_n&0&0\\0&\Upsilon_n&0\\0&0&\Lambda_n\end{bmatrix}\begin{bmatrix}\hat{u}^L_n\\u^R_n\\\tilde{u}^L_n\end{bmatrix}+\sum_{k=n+1}^NE(\omega_k'\lambda_k).
\end{align}
The uniqueness of the optimal $\hat{u}^L_n$, $u^R_n$ and $\tilde{u}^L_n$ implies that the quadratic term $\begin{bmatrix}\Upsilon_n&0&0\\0&\Upsilon_n&0\\0&0&\Lambda_n\end{bmatrix}$ is positive for any nonzero $\begin{bmatrix}\hat{u}^L_n\\u^R_n\\\tilde{u}^L_n\end{bmatrix}$. Therefore, it follows that $\Upsilon_n>0$ and $\Lambda_n>0$. The proof of the necessity is completed.

``\emph{Sufficiency}'': Suppose that $\Upsilon_k>0$ and $\Lambda_k>0$ for $0\leq k\leq N$. The uniqueness of the solution to \emph{Problem 1} is to be shown.

Define
\begin{align}
V_N(k,x_k)=E\left[x_k'Z_k\hat{x}_{k|k}+x_k'X_{k}\tilde{x}_k\right].
\end{align}
Using (\ref{h}), (\ref{9})-(\ref{30}) and (\ref{c}), we have
\begin{align}
\nonumber&V_N(k,x_k)-V_N(k+1,x_{k+1})\\
\nonumber&=E[x_k'Z_kx_k-\tilde{x}_k'Z_k\tilde{x}_k+\tilde{x}_k'X_k\tilde{x}_k]\\
\nonumber&\quad-E\bigg\{\bigg(Ax_k+\begin{bmatrix}B^L&B^R\end{bmatrix}\begin{bmatrix}\hat{u}^L_k\\u^R_k\end{bmatrix}+B^L\tilde{u}^L_k+\omega_k\bigg)'Z_{k+1}\\
\nonumber&\quad\times\big[\eta_{k+1}x_{k+1}\hspace{-0.8mm}+\hspace{-0.8mm}(1\hspace{-0.8mm}-\hspace{-0.8mm}\eta_{k+1})\hat{x}_{k+1|k}\big]\hspace{-0.8mm}+\hspace{-0.8mm}\bigg(\hspace{-0.8mm}Ax_k\hspace{-0.8mm}+\hspace{-0.8mm}\begin{bmatrix}B^L\hspace{-0.8mm}&\hspace{-0.8mm}B^R\end{bmatrix}\\
\nonumber&\quad\times\begin{bmatrix}\hat{u}^L_k\\u^R_k\end{bmatrix}\hspace{-0.8mm}+\hspace{-0.8mm}B^L\tilde{u}^L_k\hspace{-0.8mm}+\hspace{-0.8mm}\omega_k\hspace{-0.8mm}\bigg)'X_{k+1}\big[x_{k+1}\hspace{-0.8mm}-\hspace{-0.8mm}\big(\eta_{k+1}x_{k+1}\hspace{-0.8mm}\\
\nonumber&\quad+\hspace{-0.8mm}(1\hspace{-0.8mm}-\hspace{-0.8mm}\eta_{k+1})\hat{x}_{k+1|k}\big)\big]\bigg\}\\
\nonumber&=E\bigg\{x_k'(Z_k-A'Z_{k+1}A+K_k'\Upsilon_kK_k)x_k\\
\nonumber&\quad-\begin{bmatrix}\hat{u}^L_k\\u^R_k\end{bmatrix}'\bigg(\Upsilon_k\hspace{-0.8mm}-\hspace{-0.8mm}\begin{bmatrix}R^L\hspace{-0.8mm}&\hspace{-0.8mm}0\\0\hspace{-0.8mm}&\hspace{-0.8mm}R^R\end{bmatrix}\bigg)\hspace{-0.8mm}\begin{bmatrix}\hat{u}^L_k\\u^R_k\end{bmatrix}\hspace{-0.8mm}-\hspace{-0.8mm}2\begin{bmatrix}\hat{u}^L_k\\u^R_k\end{bmatrix}'\hspace{-0.8mm}\Upsilon_kK_k\hat{x}_{k|k}\\
\nonumber&\quad-\hat{x}_{k|k}'K_k'\Upsilon_kK_k\hat{x}_{k|k}\hspace{-0.8mm}-\hspace{-0.8mm}(\hspace{-0.8mm}\tilde{u}^L_k\hspace{-0.8mm})'(\Lambda_k\hspace{-0.8mm}-\hspace{-0.8mm}R^L)\tilde{u}^L_k\hspace{-0.8mm}-\hspace{-0.8mm}2(\tilde{u}^L_k)'M_k\tilde{x}_k\\
\nonumber&\quad-\tilde{x}_k'(pA'X_{k+1}A\hspace{-0.8mm}-\hspace{-0.8mm}pA'Z_{k+1}A\hspace{-0.8mm}-\hspace{-0.8mm}X_k\hspace{-0.8mm}+\hspace{-0.8mm}Z_k\hspace{-0.8mm}+\hspace{-0.8mm}K_k'\Upsilon_kK_k)\\
\nonumber&\quad
\times\tilde{x}_k\bigg\}-pTr(Q_{\omega_k}X_{k+1})-(1-p)Tr(Q_{\omega_k}Z_{k+1})\\
\nonumber&=E\bigg\{x_k'Qx_k+\begin{bmatrix}\hat{u}^L_k\\u^R_k\end{bmatrix}'\begin{bmatrix}R^L\hspace{-0.8mm}&\hspace{-0.8mm}0\\0\hspace{-0.8mm}&\hspace{-0.8mm}R^R\end{bmatrix}\begin{bmatrix}\hat{u}^L_k\\u^R_k\end{bmatrix}+(\tilde{u}^L_k)'R^L\tilde{u}^L_k\\
\nonumber&\quad-\left(\begin{bmatrix}\hat{u}^L_k\\u^R_k\end{bmatrix}+K_k\hat{x}_{k|k}\right)'\Upsilon_k\left(\begin{bmatrix}\hat{u}^L_k\\u^R_k\end{bmatrix}+K_k\hat{x}_{k|k}\right)\\
\nonumber&\quad-(\tilde{u}^L_k+\Lambda_k^{-1}M_k\tilde{x}_k)'\Lambda_k(\tilde{u}^L_k+\Lambda_k^{-1}M_k\tilde{x}_k)\bigg\}\\
         &\quad-pTr(Q_{\omega_k}X_{k+1})-(1-p)Tr(Q_{\omega_k}Z_{k+1}).\label{33}
\end{align}
Taking summation from $k=0$ to $k=N$ on both sides of (\ref{33}), the cost function (\ref{10}) can be written as
\begin{align}
\nonumber J_N&=E\big[x_0'Z_0\hat{x}_{0|0}\hspace{-0.8mm}+\hspace{-0.8mm}x_0'X_0\tilde{x}_0\big]\hspace{-0.8mm}+\hspace{-0.8mm}E\sum^N_{k=0}\bigg\{\hspace{-0.8mm}\left(\hspace{-0.8mm}\begin{bmatrix}\hat{u}^L_k\\u^R_k\end{bmatrix}\hspace{-0.8mm}+\hspace{-0.8mm}K_k\hat{x}_{k|k}\hspace{-0.8mm}\right)'\\
\nonumber&\quad\times\Upsilon_k\left(\begin{bmatrix}\hat{u}^L_k\\u^R_k\end{bmatrix}+K_k\hat{x}_{k|k}\right)+(\tilde{u}^L_k+\Lambda_k^{-1}M_k\tilde{x}_k)'\\
\nonumber&\quad\times\Lambda_k(\tilde{u}^L_k+\Lambda_k^{-1}M_k\tilde{x}_k)\bigg\}+\sum^N_{k=0}[pTr(Q_{\omega_k}X_{k+1})\\
\nonumber&\quad+(1-p)Tr(Q_{\omega_k}Z_{k+1})].
\end{align}
Note that $\Upsilon_k>0$ and $\Lambda_k>0$ for $k=0,\ldots,N$. Thus, with (\ref{b}), the unique optimal controllers $u^R_k$ and ${u}^L_k$ exist and are given by (\ref{8}) and (\ref{14}). Accordingly, we have the optimal cost as
\begin{align}
\nonumber J^*_N&=E\big[x_0'Z_0\hat{x}_{0|0}+x_0'X_0\tilde{x}_0\big]+\sum^N_{k=0}[pTr(Q_{\omega_k}X_{k+1})\\
&\quad+(1-p)Tr(Q_{\omega_k}Z_{k+1})],\label{34}
\end{align}
which is exactly the value of (\ref{25}). This completes the sufficiency proof.
\end{proof}
\section{Proof of Theorem 2}
\begin{proof}
``Necessity'': Under Assumptions 1 and 2, suppose the system (\ref{16}) is stabilizable in the mean-square sense. We will show that there exist the unique solutions $Z$ and $X$ to the two Riccati equations (\ref{24}) and (\ref{26}), such that $Z>0$, $\Psi>0$ and $E[{x}_{0}'Z\hat{x}_{0|0}+{x}_0'X\tilde{x}_0]\geq0$ for any initial value $x_0$.

With (\ref{b}), the cost function (\ref{21}) can be written as
\begin{align} J\hspace{-0.8mm}&=\hspace{-0.8mm}E\sum_{k=0}^\infty\bigg[{x_k}'Qx_k\hspace{-0.8mm}+\hspace{-0.8mm}\begin{bmatrix}\hat{u}^L_k\\u^R_k\end{bmatrix}'\begin{bmatrix}R^L\hspace{-0.8mm}&\hspace{-0.8mm}0\\0\hspace{-0.8mm}&\hspace{-0.8mm}R^R\end{bmatrix}\hspace{-0.8mm}\begin{bmatrix}\hat{u}^L_k\\u^R_k\end{bmatrix}\hspace{-0.8mm}+\hspace{-0.8mm}(\tilde{u}^L_k)'R^L\tilde{u}^L_k\bigg]\label{g}
\end{align}
The transformation of (\ref{g}) is convenient for the following proof.

Firstly, we will prove that $Z_k(N)$ and $X_k(N)$ are convergent. Observing (\ref{9}) and (\ref{19}), it can be obtained that $Z_k(N)$ and $X_k(N)$ are uncorrelated with the initial value $x_0$. Since the additive noise is not considered in this section, let $\bar{x}_0=0$ and with (\ref{83}) and (\ref{84}), the optimal cost (\ref{34}) can be written as
\begin{align}
\nonumber J^*_N&=E\big[x_0'Z_0(N)\hat{x}_{0|0}\hspace{-0.8mm}+\hspace{-0.8mm}x_0'X_0(N)\tilde{x}_0\big]\\
\nonumber      &=E\left\{x_0'\left[(1-p)Z_0(N)\hspace{-0.8mm}+\hspace{-0.8mm}pX_0(N)\right]x_0\right\}\\
               &=E\left[x_0'\Psi_0(N)x_0\right]. \label{48}
\end{align}
Thus, we have
\begin{align}
E\left[x_0'\Psi_0(N)x_0\right]=J^*_N\leq J^*_{N+1}=E\left[x_0'\Psi_0(N\hspace{-0.8mm}+\hspace{-0.8mm}1)x_0\right].\label{46}
\end{align}
Due to the arbitrariness of $x_0$, it can be obtained that $\Psi_0(N)$ increases with respect to $N$.

Now the boundedness of $\Psi_0(N)$ is to be shown. Since system (\ref{16}) is stabilizable in the mean-square sense, there exist $u^R_k=-K_1\hat{x}_{k|k}$, and $\tilde{u}^L_k=-K_2\hat{x}_{k|k}-L\tilde{x}_{k}$ with constant matrices $K_1$, $K_2$ and $L$, such that the closed-loop system (\ref{16}) satisfies
\begin{align}
\lim_{k\to \infty}E(x_k'x_k)=0.\label{44}
\end{align}
With (\ref{22}), we have
\begin{align}
\nonumber \lim_{k\to\infty}E[x_k'x_k]&=\lim_{k\to\infty}E[(\hat{x}_{k|k}+\tilde{x}_{k|k})'(\hat{x}_{k|k}+\tilde{x}_{k|k})]\\
                                     &=\lim_{k\to\infty}\left(E[\hat{x}_{k|k}'\hat{x}_{k|k}]+E[\tilde{x}_k'\tilde{x}_k]\right).\label{45}
\end{align}
Combining (\ref{44}) and (\ref{45}), it yields that
\begin{align}
\lim_{k\to\infty}E[\hat{x}_{k|k}'\hat{x}_{k|k}]=0,\quad \lim_{k\to\infty}E[\tilde{x}_k'\tilde{x}_k]=0.\label{50}
\end{align}
By \cite{R39}, there exist constants $c_1>0$, $c_2>0$ and $c_3>0$ satisfying
\begin{align}
\nonumber \sum^\infty_{k=0}E[x_k'x_k]&\leq c_1E[x_0'x_0],\\
\nonumber \sum^\infty_{k=0}E[\hat{x}_{k|k}'\hat{x}_{k|k}]&\leq c_2E[\hat{x}_{0|0}'\hat{x}_{0|0}],\\
\nonumber \sum^\infty_{k=0}E[\tilde{x}_k'\tilde{x}_k]&\leq c_3E[\tilde{x}_0'\tilde{x}_0].
\end{align}
Select a constant $c_4$, such that $Q\leq c_4I$, $K_1R^RK_1\leq c_4I$, $K_2'R^LK_2\leq c_4I$ and $L'R^LL\leq c_4I$. Then, we have
\begin{align}
\nonumber J&=E\sum_{k=0}^\infty\bigg[{x_k}'Qx_k\hspace{-0.8mm}+\hspace{-0.8mm}(u^R_k)'R^Ru^R_k+\hspace{-0.8mm}({u}^L_k)'R^L{u}^L_k\bigg]\\
\nonumber  &=E\sum_{k=0}^\infty[{x_k}'Qx_k]+E\sum_{k=0}^\infty\bigg[\hat{x}_{k|k}'K_1'R^RK_1\hat{x}_{k|k}\bigg]\\
\nonumber  &\quad++E\sum_{k=0}^\infty\bigg[\hat{x}_{k|k}'K_2'R^RK_2\hat{x}_{k|k}\bigg]+E\sum_{k=0}^\infty[\tilde{x}_k'L'R_L\tilde{x}_k]\\
\nonumber  &\leq c_4\left\{E\sum_{k=0}^\infty[{x_k}'x_k]\hspace{-0.8mm}+\hspace{-0.8mm}E\sum_{k=0}^\infty\left[\hat{x}_{k|k}'\hat{x}_{k|k}\right]\hspace{-0.8mm}+\hspace{-0.8mm}E\sum_{k=0}^\infty[\tilde{x}_k'\tilde{x}_k]\right\}\\
\nonumber  &\leq c_4\left\{c_1E[x_0'x_0]+c_2E[\hat{x}_{0|0}'\hat{x}_{0|0}]+c_3E[\tilde{x}_0'\tilde{x}_0]\right\}
\end{align}
From (\ref{48}), for any $N>0$, we have
\begin{align}
\nonumber &E\left[x_0'\Psi_0(N)x_0\right]=J^*_N\leq J\\
\nonumber&\leq c_4\left\{c_1E[x_0'x_0]+c_2E[\hat{x}_{0|0}'\hat{x}_{0|0}]+c_3E[\tilde{x}_0'\tilde{x}_0]\right\}
\end{align}
which implies that $\Psi_0(N)$ is bounded. Recall that $\Psi_0(N)$ is monotonically increasing. Thus, $\Psi_0(N)$ is convergent, i.e.,
\begin{align}
\nonumber\lim_{N\to\infty}\Psi_0(N)=\Psi.
\end{align}
Note that the variables given in (\ref{9})-(\ref{30}) are time invariant for $N$ due to the choice that $P_{N+1}=0$, i.e,
\begin{align}
\nonumber Z_k(N)&=Z_{k-s}(N-s), X_k(N)=X_{k-s}(N-s),\\
\nonumber K_k(N)&=K_{k-s}(N-s), \Upsilon_k=\Upsilon_{k-s}(N-s),\\
\nonumber\Psi_k(N)&=\Psi_{k-s}(N-s), \Lambda_k(N)=\Lambda_{k-s}(N-s),\\
\nonumber M_k(N)&=M_{k-s}(N-s), s\leq k\leq N, 0\leq s\leq N.
\end{align}
Thus, we obtain that
\begin{align}
\nonumber \lim_{N\to\infty}\Psi_k(N)=\lim_{N\to\infty}\Psi_0(N-k)=\Psi.
\end{align}
Hence, $\Psi_k(N)$ is convergent. Now we shall show the convergence of $Z_k(N)$ and $X_k(N)$.

Since $Z_k(N)$ and $X_k(N)$ are uncorrelated with the initial value $x_0$, and the additive noise is not considered in this section, we set the initial value $x_0$ known, which means $\hat{x}_{0|0}=x_0$ and $\tilde{x}_0=0$. Obviously, the optimal cost function (\ref{34}) can be written as
\begin{align}
J^*_N=E[x_0'Z_0(N)x_0]. \label{82}
\end{align}
The proof of the convergence of $Z_k(N)$ is similar to the convergence of $\Psi_k(N)$, and it is omitted here. Thus, the convergence of $Z_k(N)$ is obtained. Recalling (\ref{67}), due to the convergence of $Z_k(N)$ and $\Psi_k(N)$, it can be obtained that $X_k(N)$ is convergent.

Next we will prove that there exists $N_0>0$ such that $\Psi_0(N_0)>0$. Suppose this is not the case. Then there exists nonzero $x$ satisfying $E[x'\Psi_0(N)x]=0$. Let the initial value $x_0=x$. Then the optimal cost function of (\ref{48}) is as
\begin{align}
\nonumber J^*_N&=\sum^N_{k=0}E\bigg[x_k^{*'}Qx_k^{*}+(u^{*R}_k)^{'}R^Ru^{*R}_k+(u^{*L}_k)'R^Lu^{*L}_k\bigg]\\
\nonumber      &=E[x'\Psi_0(N)x]\\
\nonumber      &=0,
\end{align}
where $x_k^{*}$ presents the optimal state trajectory, $u^{*R}_k$ and $u^{*L}_k$ stand for the optimal controllers, respectively. Noting Assumption 1, i.e., $R^L>0$, $R^R>0$ and $Q=C'C\geq0$, it follows that
\begin{align}
\nonumber u^{*R}_k=0, u^{*L}_k=0, Cx_k^*=0, 0\leq k\leq N, N\geq 0.
\end{align}
Note Assumption 2, i.e., $(A,Q^{1/2})$ is observable. It can be obtained that $x_0=x=0$, which is a contradiction to $x\neq0$. Thus, there exists $N_0\geq0$ such that $\Psi_0(N_0)>0$. Therefore, $\Psi=\lim_{N\to\infty}\Psi_0(N)>0$ has been shown. By setting the initial value $x_0$ known, noting (\ref{82}), the proof of $Z>0$ is similar to the proof of $\Psi>0$ and it is omitted here.

Next, we will show the uniqueness of the solutions $Z$ and $X$ to (\ref{24})-(\ref{36}). Let $Z^e$ and $X^e$, be other solutions to (\ref{24})-(\ref{36}) satisfying $Z^e>0$ and $\Psi^e>0$, i.e.,
\begin{align}
\nonumber Z^e&=A'Z^eA+Q-K^{e'}\Upsilon K^e,\\
\nonumber X^e&=(1-p)A'Z^eA\hspace{-0.8mm}+pA'X^eA\hspace{-0.8mm}+\hspace{-0.8mm}Q\hspace{-0.8mm}-\hspace{-0.8mm}M^{e'}\Lambda^{e^{-1}}M^e,
\end{align}
where
\begin{align}
\nonumber  K^e&=\Upsilon^{e^{-1}}\begin{bmatrix}B^L&B^R\end{bmatrix}'Z^eA,\\
\nonumber  \Upsilon^e&=\begin{bmatrix}B^L&B^R\end{bmatrix}'Z^e\begin{bmatrix}B^L&B^R\end{bmatrix}+\begin{bmatrix}R^L&0\\0&R^R\end{bmatrix},\\
\nonumber \Lambda&=(1-p)(B^L)'Z^eB^L\hspace{-0.8mm}+p(B^L)'X^eB^L\hspace{-0.8mm}+R^L,\\
          \Psi^e&=(1-p)Z^e+pX^e,\label{59}\\
\nonumber M^e&=(1-p)(B^L)'Z^eA+p(B^L)'X^eA.
\end{align}
Recalling (\ref{48}), the optimal value of the cost function is as
\begin{align}
\nonumber J^*=E[x_0'\Psi^e x_0]=E[x_0'\Psi x_0].
\end{align}
As $x_0$ is arbitrary, the above equation indicates that
\begin{align}
\Psi^e=\Psi.\label{49}
\end{align}
Via setting the initial value $x_0$ known, with (\ref{82}), the uniqueness of $Z$ can be obtained. Combining (\ref{67}) and (\ref{49}), it yields that $X$ is unique.

At last, we shall show that $E[{x}_{0}'Z\hat{x}_{0|0}+{x}_0'X\tilde{x}_0]\geq0$ for any initial value $x_0$. Since the additive noise is not considered in this section, the optimal cost (\ref{34}) can be written as
\begin{align}
\nonumber J^*_N&=E\big[x_0'Z_0(N)\hat{x}_{0|0}+x_0'X_0(N)\tilde{x}_0\big]
\end{align}
Since $J^*_N\geq$ always holds for any initial value $x_0$, we have that $E[{x}_{0}'Z\hat{x}_{0|0}+{x}_0'X\tilde{x}_0]\geq0$ for any initial value $x_0$. In other word, if the initial time is $n$, $n\in N$, we have that $E[{x}_{n}'Z\hat{x}_{n|n}+{x}_n'X\tilde{x}_n]\geq0$. This completes the proof of the necessity.

``Sufficiency": Under Assumptions 1 and 2, suppose that $Z$ and $X$ are the solutions to (\ref{24}) and (\ref{26}) satisfy $Z>0$, $\Psi>0$ and $E[{x}_{0}'Z\hat{x}_{0|0}+{x}_0'X\tilde{x}_0]\geq0$ for any initial value $x_0$. We shall show that (\ref{37}) and (\ref{38}) stabilize system (\ref{16}) in the mean square sense.

Using (\ref{b}), system (\ref{16}) can be written as
\begin{align}
x_{k+1}=Ax_{k}+\begin{bmatrix}B^L&B^R\end{bmatrix}\begin{bmatrix}\hat{u}^L_k\\u^R_k\end{bmatrix}+B^L\tilde{u}^L_k. \label{I}
\end{align}

Define the Lyapunov function candidate $V(k,x_k)$ as
\begin{align}
V(k,x_k)=E\left[x_k'Z\hat{x}_{k|k}+x_k'X\tilde{x}_k\right].\label{40}
\end{align}
Next we shall show the convergence of $V(k,x_k)$. Using (\ref{I}), (\ref{24})-(\ref{36}) and (\ref{c}), it yields that
\begin{align}
\nonumber&V(k,x_k)-V(k+1,x_{k+1})\\
\nonumber&=E\bigg\{x_k'(Z-A'ZA+K'\Upsilon K)x_k\\
\nonumber&\quad-\begin{bmatrix}\hat{u}^L_k\\u^R_k\end{bmatrix}'\bigg(\Upsilon-\hspace{-0.8mm}\begin{bmatrix}R^L\hspace{-0.8mm}&\hspace{-0.8mm}0\\0\hspace{-0.8mm}&\hspace{-0.8mm}R^R\end{bmatrix}\bigg)\begin{bmatrix}\hat{u}^L_k\\u^R_k\end{bmatrix}-\hspace{-0.8mm}2\begin{bmatrix}\hat{u}^L_k\\u^R_k\end{bmatrix}'\Upsilon K\hat{x}_{k|k}\\
\nonumber&\quad-\hat{x}_{k|k}'K'\Upsilon K\hat{x}_{k|k}\hspace{-0.8mm}-(u^L_k)'(\Lambda\hspace{-0.8mm}-R^L)u^L_k\hspace{-0.8mm}-2(u^L_k)'M\tilde{x}_k\\
\nonumber&\quad-\tilde{x}_k'(pA'XA\hspace{-0.8mm}-\hspace{-0.8mm}pA'ZA\hspace{-0.8mm}-\hspace{-0.8mm}X\hspace{-0.8mm}+\hspace{-0.8mm}Z\hspace{-0.8mm}+\hspace{-0.8mm}K'\Upsilon K)\tilde{x}_k\bigg\}\\
\nonumber&=E\bigg\{x_k'Qx_k+\begin{bmatrix}\hat{u}^L_k\\u^R_k\end{bmatrix}'\begin{bmatrix}R^L\hspace{-0.8mm}&\hspace{-0.8mm}0\\0\hspace{-0.8mm}&\hspace{-0.8mm}R^R\end{bmatrix}\begin{bmatrix}\hat{u}^L_k\\u^R_k\end{bmatrix}+(\tilde{u}^L_k)'R^L\tilde{u}^L_k\\
\nonumber&\quad-\left(\begin{bmatrix}\hat{u}^L_k\\u^R_k\end{bmatrix}+K\hat{x}_{k|k}\right)'\Upsilon\left(\begin{bmatrix}\hat{u}^L_k\\u^R_k\end{bmatrix}+K\hat{x}_{k|k}\right)\\
         &\quad-(\tilde{u}^L_k+\Lambda^{-1}M\tilde{x}_k)'\Lambda(\tilde{u}^L_k+\Lambda^{-1}M\tilde{x}_k)\bigg\}\label{52}\\
         &=E\bigg\{\hspace{-0.8mm}x_k'Qx_k\hspace{-0.8mm}+\hspace{-0.8mm}\begin{bmatrix}\hat{u}^L_k\\u^R_k\end{bmatrix}'\begin{bmatrix}R^L\hspace{-0.8mm}&\hspace{-0.8mm}0\\0\hspace{-0.8mm}&\hspace{-0.8mm}R^R\end{bmatrix}\hspace{-0.8mm}\begin{bmatrix}\hat{u}^L_k\\u^R_k\end{bmatrix}\hspace{-0.8mm}+\hspace{-0.8mm}(\tilde{u}^L_k)'R^L\tilde{u}^L_k\bigg\}\hspace{-0.8mm}\geq0,\label{39}
\end{align}
where (\ref{37}) and (\ref{38}) have been applied in the last identity. Obviously, $V(k,x_k)$ is monotonically decreasing with respect to $k$. Since $E[{x}_{0}'Z\hat{x}_{0|0}+{x}_0'X\tilde{x}_0]\geq0$ for any initial value $x_0$, we have that $V(k,x_k)\geq0$, i.e., $V(k,x_k)$ is bounded below. Thus $V(k,x_k)$ is convergent.

Now let $l$ be any nonnegative integer. Taking summation from $k=l$ to $k=l+N$ on both side of (\ref{39}), and letting $l\to\infty$, it yields that
\begin{align}
\nonumber&\lim_{l\to\infty}\hspace{-0.8mm}\sum_{k=l}^{l+N}\hspace{-0.8mm}E\bigg\{\hspace{-0.8mm}x_k'Qx_k\hspace{-0.8mm}+\hspace{-0.8mm}\begin{bmatrix}\hat{u}^L_k\\u^R_k\end{bmatrix}'\begin{bmatrix}R^L\hspace{-0.8mm}&\hspace{-0.8mm}0\\0\hspace{-0.8mm}&\hspace{-0.8mm}R^R\end{bmatrix}\hspace{-0.8mm}\begin{bmatrix}\hat{u}^L_k\\u^R_k\end{bmatrix}\hspace{-0.8mm}+\hspace{-0.8mm}(\tilde{u}^L_k)'R^L\tilde{u}^L_k\bigg\}\\
         &=\lim_{l\to\infty}\left[V(m,x_m)-V(m+N+1,x_{m+N+1})\right]=0,\label{41}
\end{align}
where the convergence of $V(k,x_k)$ is imposed in the last identity.

Noting (\ref{25}) and letting $\bar{x}_0=0$, we have the cost function as
\begin{align}
\nonumber&\sum_{k=0}^{N}E\bigg[x_k'Qx_k\hspace{-0.8mm}+\hspace{-0.8mm}\begin{bmatrix}\hat{u}^L_k\\u^R_k\end{bmatrix}'\begin{bmatrix}R^L&0\\0&R^R\end{bmatrix}\begin{bmatrix}\hat{u}^L_k\\u^R_k\end{bmatrix}\hspace{-0.8mm}+\hspace{-0.8mm}(\tilde{u}^L_k)'R^L\tilde{u}^L_k\bigg]\\
\nonumber&\geq E\big[x_0'Z_0\hat{x}_{0|0}\hspace{-0.8mm}+\hspace{-0.8mm}x_0'X_0\tilde{x}_0\big]=E\left\{x_0'\Phi_0x_0\right\}.
\end{align}
Through a time-shift of length of $l$, it yields that
\begin{align}
\nonumber&\sum_{k=l}^{l+N}\hspace{-0.8mm}E\bigg[x_k'Qx_k\hspace{-0.8mm}+\hspace{-0.8mm}\begin{bmatrix}\hat{u}^L_k\\u^R_k\end{bmatrix}'\begin{bmatrix}R^L&0\\0&R^R\end{bmatrix}\begin{bmatrix}\hat{u}^L_k\\u^R_k\end{bmatrix}\hspace{-0.8mm}+\hspace{-0.8mm}(\tilde{u}^L_k)'R^L\tilde{u}^L_k\bigg]\\
&\geq E\left\{x_l'\Phi_lx_l\right\}\geq0.\label{42}
\end{align}
Combining with (\ref{41}) and (\ref{42}), it follows that
\begin{align}
\lim_{l\to\infty}E\left[x_{l}'x_{l}\right]=0.
\end{align}
Therefore, the closed-loop system (\ref{16}) is stabilizable in the mean-square sense by the controllers (\ref{37}) and (\ref{38}).

Next we shall show that (\ref{37}) and (\ref{38}) minimize the cost function (\ref{21}). Taking summation from $k=0$ to $k=N$ on both sides of (\ref{52}), we have
\begin{align}
\nonumber&E\sum_{k=0}^N\bigg[{x_k}'Qx_k\hspace{-0.8mm}+\hspace{-0.8mm}\begin{bmatrix}\hat{u}^L_k\\u^R_k\end{bmatrix}'\begin{bmatrix}R^L&0\\0&R^R\end{bmatrix}\begin{bmatrix}\hat{u}^L_k\\u^R_k\end{bmatrix}\hspace{-0.8mm}+\hspace{-0.8mm}(\tilde{u}^L_k)'R^L\tilde{u}^L_k\bigg]\\
\nonumber &=V(0,x_0)-V(N+1,x_{N+1})\\
\nonumber&\quad+E\bigg\{\left(\begin{bmatrix}\hat{u}^L_k\\u^R_k\end{bmatrix}+K\hat{x}_{k|k}\right)'\Upsilon\left(\begin{bmatrix}\hat{u}^L_k\\u^R_k\end{bmatrix}+K\hat{x}_{k|k}\right)\\
         &\quad+(\tilde{u}^L_k+\Lambda^{-1}M\tilde{x}_k)'\Lambda(\tilde{u}^L_k+\Lambda^{-1}M\tilde{x}_k)\bigg\},\label{43}
\end{align}
where $V(0,x_0)$ and $V(N+1,x_{N+1})$ are defined in (\ref{40}). Combining with (\ref{50}) and (\ref{40}), it yields that
\begin{align}
\nonumber \lim_{k\to\infty}V(k,x_k)=0.
\end{align}
Thus, by letting $N\to\infty$ on both sides of (\ref{43}), the cost function (\ref{21}) is rewritten as
\begin{align}
\nonumber J&=E\left[x_0'Z\hat{x}_{0|0}+x_0'X\tilde{x}_0\right]\\
\nonumber&\quad+E\bigg\{\left(\begin{bmatrix}\hat{u}^L_k\\u^R_k\end{bmatrix}+K\hat{x}_{k|k}\right)'\Upsilon\left(\begin{bmatrix}\hat{u}^L_k\\u^R_k\end{bmatrix}+K\hat{x}_{k|k}\right)\\
         &\quad+(\tilde{u}^L_k+\Lambda^{-1}M\tilde{x}_k)'\Lambda(\tilde{u}^L_k+\Lambda^{-1}M\tilde{x}_k)\bigg\}.\label{53}
\end{align}
Due to the positive definiteness of $\Upsilon$ and $\Lambda$, the optimal controllers to minimize (\ref{53}) must be (\ref{37}) and (\ref{38}). Moreover, the corresponding optimal cost is as (\ref{54}). The proof of sufficiency is completed.
\end{proof}
\section{Proof of Lemma 2}
\begin{proof}
Using (\ref{l}), we have that
\begin{align}
\nonumber \tilde{x}_k=(1-\eta_{k})(A\tilde{x}_{k-1}+B^L\tilde{u}^L_{k-1}+\omega_k).
\end{align}
Noting the assumption that system (\ref{16}) is stabilizable in the mean square sense. Thus, the algebraic Riccati equations (\ref{24}) and (\ref{26}) hold. Substituting (\ref{57}) into the above equation, it yields that
\begin{align}
\nonumber\tilde{x}_k&=(1-\eta_{k})[(A-B^L\Lambda^{-1}M)\tilde{x}_{k-1}+\omega_k].
\end{align}
Thus, the estimator error covariance matrix
\begin{align}
\nonumber\Sigma_k&=p(A-B^L\Lambda^{-1}M)\Sigma_{k-1}(A-B^L\Lambda^{-1}M)'+pQ_{\omega}\\
\nonumber        &=p^2(A-B^L\Lambda^{-1}M)^2\Sigma_{k-2}[(A-B^L\Lambda^{-1}M)']^2\\
\nonumber        &\quad+p^2(A\hspace{-0.8mm}-\hspace{-0.8mm}B^L\Lambda^{-1}M)Q_{\omega}(A\hspace{-0.8mm}-\hspace{-0.8mm}B^L\Lambda^{-1}M)'\hspace{-0.8mm}+\hspace{-0.8mm}pQ_{\omega}\\
\nonumber        &\qquad\vdots\\
\nonumber        &=p^k(A-B^L\Lambda^{-1}M)^k\Sigma_0[(A-B^L\Lambda^{-1}M)']^k\\
\nonumber        &\quad+\hspace{-0.8mm}\sum_{i=1}^kp^i(A\hspace{-0.8mm}-\hspace{-0.8mm}B^L\Lambda^{-1}M)^{i-1}Q_{\omega}[(A\hspace{-0.8mm}-B^L\Lambda^{-1}M)']^{i-1}
\end{align}
``Sufficiency'': If $\sqrt{p}|\lambda_{max}(A-B^L\Lambda^{-1}M)|<1$, we shall show that $\Sigma_k$ is convergent.

Obviously, since $\sqrt{p}|\lambda_{max}(A-B^L\Lambda^{-1}M)|<1$, we have $\lim_{k\to\infty}p^k(A-B^L\Lambda^{-1}M)^k\Sigma_0[(A-B^L\Lambda^{-1}M)']^k=0$. Accordingly, we have that
\begin{align}
\nonumber\lim_{k\to\infty}\hspace{-0.8mm}\sum_{i=1}^kp^i(A\hspace{-0.8mm}-\hspace{-0.8mm}B^L\Lambda^{-1}M)^{i-\hspace{-0.8mm}1}Q_{\omega}[(A\hspace{-0.8mm}-\hspace{-0.8mm}B^L\Lambda^{-1}M)']^{i-\hspace{-0.8mm}1}\hspace{-0.8mm}=\hspace{-0.8mm}P,
\end{align}
where $P$ is the unique solution of the following equation:
\begin{align}
\nonumber P=p(A-B^L\Lambda^{-1}M)P(A-B^L\Lambda^{-1}M)'+pQ_{\omega}.
\end{align}
This completes the proof of the sufficiency.

``Necessity'': Suppose that $\Sigma_k$ is convergent, we shall show that $\sqrt{p}|\lambda_{max}(A-B^L\Lambda^{-1}M)|<1$. Assume that $\sqrt{p}|\lambda_{max}(A-B^L\Lambda^{-1}M)|\geq1$, it is readily obtained that
\begin{align}
\nonumber &\lim_{k\to\infty}p^k(A-B^L\Lambda^{-1}M)^k\Sigma_0[(A-B^L\Lambda^{-1}M)']^k=\infty,\\
\nonumber &\lim_{k\to\infty}\hspace{-0.8mm}\sum_{i=1}^kp^i(\hspace{-0.8mm}A\hspace{-0.8mm}-\hspace{-0.8mm}B^L\Lambda^{-1}M\hspace{-0.8mm})^{i-1}Q_{\omega}[(\hspace{-0.8mm}A\hspace{-0.8mm}-\hspace{-0.8mm}B^L\Lambda^{-1}M\hspace{-0.8mm})']^{i-1}\hspace{-0.8mm}=\hspace{-0.8mm}\infty,
\end{align}
which is contradicted to the convergence of $\Sigma_k$. Thus, we have $\sqrt{p}|\lambda_{max}(A-B^L\Lambda^{-1}M)|<1$. This completes the proof of the necessity.
\end{proof}
\section{Proof of Corollary 1}
\begin{proof}
Under Assumptions 1 and 2, suppose $\sqrt{p}|\lambda_{max}(A-B^L\Lambda^{-1}M)|<1$, and there exist the unique solutions $Z$ and $X$ to the Riccati equations (\ref{24})-(\ref{36}). We shall show that (\ref{56}) and (\ref{57}) make the system (\ref{1}) bounded in the mean-square sense.

Substituting (\ref{56}) and (\ref{57}) into the system (\ref{1}), it yields that
\begin{align}
\nonumber x_{k+1}&=Ax_k-\begin{bmatrix}B^L&B^R\end{bmatrix}K\hat{x}_{k|k}-B^L\Lambda^{-1}M\tilde{x}_k+\omega_k\\
\nonumber        &=\left(A-\begin{bmatrix}B^L&B^R\end{bmatrix}K\right)x_k\\
\nonumber        &\quad+\left(\begin{bmatrix}B^L&B^R\end{bmatrix}K-B^L\Lambda^{-1}M\right)\tilde{x}_k+\omega_k
\end{align}
From Lemma 2, the estimation error covariance $\lim_{k\to\infty}E\left[\tilde{x}_k\tilde{x}_k'\right]$ is bounded which implies that \\ $\lim_{k\to\infty}E\left[\tilde{x}_k'\tilde{x}_k\right]$ is bounded. Besides, the covariance of the additive noise $\omega_k$ is $Q_{\omega}$. Hence, $\lim_{k\to\infty}E\left[{x}_k'{x}_k\right]$ is bounded if and only if the linear system
\begin{align}
\beta_{k+1}=\left(A-\begin{bmatrix}B^L&B^R\end{bmatrix}K\right)\beta_k,\label{61}
\end{align}
with the initial value $\beta_0=x_0$, is stable in the mean square sense.

Noting (\ref{24}), (\ref{35}) and (\ref{60}), we rewrite (\ref{24}) as
\begin{align}
\nonumber Z&=K'\begin{bmatrix}R^L\hspace{-0.8mm}&\hspace{-0.8mm}0\\0\hspace{-0.8mm}&\hspace{-0.8mm}R^R\end{bmatrix}K+(A-\begin{bmatrix}B^L&B^R\end{bmatrix}K)'Z\\
           &\quad\times(A-\begin{bmatrix}B^L&B^R\end{bmatrix}K)+Q.\label{62}
\end{align}
Now we will show the system (\ref{61}) is stable in the mean square sense. Define the Lyapunov function candidate $W(k,\beta_k)$ as
\begin{align}
\nonumber W(k,\beta_k)=E\left[\beta_k'Z\beta_k\right].
\end{align}
Using (\ref{62}), it yields that
\begin{align}
\nonumber&W(k+1,\beta_{k+1})-W(k,\beta_k)\\
\nonumber&=\hspace{-0.8mm}E\big\{\beta_k'\big[\hspace{-0.8mm}\left(A\hspace{-0.8mm}-\hspace{-0.8mm}\begin{bmatrix}B^L\hspace{-0.8mm}&\hspace{-0.8mm}B^R\end{bmatrix}K\right)'\hspace{-0.8mm}Z\hspace{-0.8mm}\left(A\hspace{-0.8mm}-\hspace{-0.8mm}\begin{bmatrix}B^L\hspace{-0.8mm}&\hspace{-0.8mm}B^R\end{bmatrix}K\right)\hspace{-0.8mm}-Z\big]\beta_k\big\}\\
         &=-E\left[\beta_k'\left((K'\begin{bmatrix}R^L\hspace{-0.8mm}&\hspace{-0.8mm}0\\0\hspace{-0.8mm}&\hspace{-0.8mm}R^R\end{bmatrix}K\hspace{-0.8mm}+\hspace{-0.8mm}Q\right)\beta_k\right].\label{63}
\end{align}
Hence, $W(k,\beta_{k})$ decreases with respect to $k$ and bounded below, i.e., $W(k,\beta_{k})$ is convergent. Taking summation from $k=0$ to $k=l$ on both sides of (\ref{63}), we have
\begin{align}
\nonumber&W(l+1,\beta_{l+1})-W(0,\beta_0)\\
\nonumber &=-\sum_{k=0}^{l}E\left[\beta_k'\left(K'\begin{bmatrix}R^L\hspace{-0.8mm}&\hspace{-0.8mm}0\\0\hspace{-0.8mm}&\hspace{-0.8mm}R^R\end{bmatrix}K+Q\right)\beta_k\right].
\end{align}
By letting $l\to\infty$ on both sides of the above equation, it yields that
\begin{align}
\nonumber &\lim_{l\to\infty} E\left[\beta_{l+1}'Z\beta_{l+1}\right]\\
\nonumber&=E\left[\beta_0'Z\beta_0\right]\hspace{-0.8mm}-\hspace{-0.8mm}\lim_{l\to\infty}\sum_{k=0}^{l}\hspace{-0.8mm}E\left[\beta_k'\hspace{-0.8mm}\left(\hspace{-0.8mm}K'\begin{bmatrix}R^L\hspace{-0.8mm}&\hspace{-0.8mm}0\\0\hspace{-0.8mm}&\hspace{-0.8mm}R^R\end{bmatrix}K\hspace{-0.8mm}+\hspace{-0.8mm}Q\hspace{-0.8mm}\right)\hspace{-0.8mm}\beta_k\right]\hspace{-0.8mm}.
\end{align}
Due to the convergence of $W(k,\beta_{k})$, we have
\begin{align}
\nonumber \lim_{l\to\infty}E\left[\beta_l'\left(K'\begin{bmatrix}R^L\hspace{-0.8mm}&\hspace{-0.8mm}0\\0\hspace{-0.8mm}&\hspace{-0.8mm}R^R\end{bmatrix}K\hspace{-0.8mm}+\hspace{-0.8mm}Q\right)\beta_l\right]=0.
\end{align}
Noting Assumptions 1 and 2, we have $\lim_{l\to\infty}E\left[\beta_l'\beta_l\right]=0$. Hence, we obtain that $\lim_{k\to\infty}E\left[x_k'x_k\right]$ is bounded, i.e., (\ref{56}) and (\ref{57}) make system (\ref{1}) bounded in the mean-square sense.

Next we will prove that (\ref{56}) and (\ref{57}) minimize the cost function (\ref{55}). Define
\begin{align}
\tilde{V}(k,x_k)=E\left[x_k'Z\hat{x}_{k|k}+x_k'X\tilde{x}_k\right].\label{65}
\end{align}
Similar to (\ref{33}), we have
\begin{align}
\nonumber&\tilde{V}(k,x_k)-\tilde{V}(k+1,x_{k+1})\\
\nonumber&=E\bigg\{x_k'Qx_k+\begin{bmatrix}\hat{u}^L_k\\u^R_k\end{bmatrix}'\begin{bmatrix}R^L\hspace{-0.8mm}&\hspace{-0.8mm}0\\0\hspace{-0.8mm}&\hspace{-0.8mm}R^R\end{bmatrix}\begin{bmatrix}\hat{u}^L_k\\u^R_k\end{bmatrix}+(\tilde{u}^L_k)'R^L\tilde{u}^L_k\\
\nonumber&\quad-\left(\begin{bmatrix}\hat{u}^L_k\\u^R_k\end{bmatrix}+K\hat{x}_{k|k}\right)'\Upsilon\left(\begin{bmatrix}\hat{u}^L_k\\u^R_k\end{bmatrix}+K\hat{x}_{k|k}\right)\\
\nonumber&\quad-(\tilde{u}^L_k+\Lambda^{-1}M\tilde{x}_k)'\Lambda(\tilde{u}^L_k+\Lambda^{-1}M\tilde{x}_k)\bigg\}\\
         &\quad-pTr(Q_{\omega}X)-(1-p)Tr(Q_{\omega}Z).\label{64}
\end{align}
Taking summation from $k=0$ to $k=N$ on both sides of (\ref{64}) leads to
\begin{align}
\nonumber&\sum_{k=0}^NE\bigg(x_k'Qx_k+\begin{bmatrix}\hat{u}^L_k\\u^R_k\end{bmatrix}'\begin{bmatrix}R^L\hspace{-0.8mm}&\hspace{-0.8mm}0\\0\hspace{-0.8mm}&\hspace{-0.8mm}R^R\end{bmatrix}\begin{bmatrix}\hat{u}^L_k\\u^R_k\end{bmatrix}+(\tilde{u}^L_k)'R^L\tilde{u}^L_k\bigg)\\
\nonumber&=\tilde{V}(0,x_0)-\tilde{V}(N+1,x_{N+1})\\
\nonumber&\quad+\sum_{k=0}^N\left(\begin{bmatrix}\hat{u}^L_k\\u^R_k\end{bmatrix}+K\hat{x}_{k|k}\right)'\Upsilon\left(\begin{bmatrix}\hat{u}^L_k\\u^R_k\end{bmatrix}+K\hat{x}_{k|k}\right)\\
\nonumber&\quad+\sum_{k=0}^N(\tilde{u}^L_k+\Lambda^{-1}M\tilde{x}_k)'\Lambda(\tilde{u}^L_k+\Lambda^{-1}M\tilde{x}_k)\bigg\}\\
         &\quad+\sum_{k=0}^N\left[pTr(Q_{\omega}X)+(1-p)Tr(Q_{\omega}Z)\right].\label{68}
\end{align}
Let $k\to\infty$ on both sides of (\ref{65}), we have
\begin{align}
\nonumber&\lim_{k\to\infty}\tilde{V}(k,x_k)\\
\nonumber&=\lim_{k\to\infty}E\left[(\hat{x}_{k|k}+\tilde{x}_k)'Z\hat{x}_{k|k}+(\hat{x}_{k|k}+\tilde{x}_k)'X\tilde{x}_k\right]\\
\nonumber&=\lim_{k\to\infty}E[\hat{x}_{k|k}'Z\hat{x}_{k|k}]+E[\tilde{x}_k'X\tilde{x}_k]
\end{align}
Since $\lim_{k\to\infty}E\left[x_k'x_k\right]$ and $\lim_{k\to\infty}E\left[\tilde{x}_k'\tilde{x}_k\right]$ are both bounded, it leads that $\lim_{k\to\infty}E[\hat{x}_{k|k}'\hat{x}_{k|k}]$ is bounded. Thus, $\lim_{k\to\infty}\tilde{V}(k,x_k)$ is bounded.

By letting $N\to\infty$ on both sides of (\ref{68}), the cost function (\ref{55}) is rewritten as
\begin{align}
\nonumber\tilde{J}&=\lim_{N\to\infty}\frac{1}{N}\bigg\{\tilde{V}(0,x_0)-\tilde{V}(N+1,x_{N+1})\\
\nonumber&\quad+\sum_{k=0}^N\left(\begin{bmatrix}\hat{u}^L_k\\u^R_k\end{bmatrix}+K\hat{x}_{k|k}\right)'\Upsilon\left(\begin{bmatrix}\hat{u}^L_k\\u^R_k\end{bmatrix}+K\hat{x}_{k|k}\right)\\
\nonumber&\quad+\sum_{k=0}^N(\tilde{u}^L_k+\Lambda^{-1}M\tilde{x}_k)'\Lambda(\tilde{u}^L_k+\Lambda^{-1}M\tilde{x}_k)\bigg\}\\
\nonumber&\quad+\sum_{k=0}^N\left[pTr(Q_{\omega_k}X_{k+1})\hspace{-0.8mm}+\hspace{-0.8mm}(1-p)Tr(Q_{\omega_k}Z_{k+1})\right]\bigg\}\\
\nonumber&=\left(\begin{bmatrix}\hat{u}^L_k\\u^R_k\end{bmatrix}+K\hat{x}_{k|k}\right)'\Upsilon\left(\begin{bmatrix}\hat{u}^L_k\\u^R_k\end{bmatrix}+K\hat{x}_{k|k}\right)\\
\nonumber&\quad+(\tilde{u}^L_k+\Lambda^{-1}M\tilde{x}_k)'\Lambda(\tilde{u}^L_k+\Lambda^{-1}M\tilde{x}_k)\bigg\}\\
         &\quad+\left[pTr(Q_{\omega}X)+(1-p)Tr(Q_{\omega}Z)\right].\label{66}
\end{align}
In view of the positive definiteness of $\Upsilon$ and $\Lambda$, the optimal controllers to minimize (\ref{66}) must be (\ref{56}) and (\ref{57}). Moreover, the corresponding optimal cost is as (\ref{55}). The proof of Corollary 1 is completed.
\end{proof}
\end{document}